\newcommand{\R} {\mathbb R}
\newcommand{\cuad}{{\sqcap\kern-.68em\sqcup}}
\newcommand{\ve}{\varepsilon}
\newcommand{\be}{\begin{equation}}
\newcommand{\ee}{\end{equation}}
\definecolor{darkgreen}{rgb}{0.2,0.7,0.1}
\newcommand{\re}{\operatorname{Re}}
\newcommand{\ima}{\operatorname{Im}}
\newcommand{\Com}{\mathbb{C}}
\newcommand{\al}{\alpha}
\newcommand{\bt}{\beta}
\newcommand{\ga}{\gamma}
\newcommand{\supp}{\operatorname{supp}}
\def\bm{\left( \begin{array}{cc}}
\def\endm{\end{array}\right)}
\providecommand{\norm}[1]{\left\| #1 \right\|}
\newcommand{\ba}{\begin{equation*}}
\newcommand{\ea}{\begin{equation*}}
\newcommand{\bea}{\begin{eqnarray}}
\newcommand{\eea}{\end{eqnarray}}
\newcommand{\bee}{\begin{eqnarray*}}
\newcommand{\eee}{\end{eqnarray*}}
\newcommand{\ben}{\begin{enumerate}}
\newcommand{\een}{\end{enumerate}}
\numberwithin{equation}{section}
\newtheorem{thm}{Theorem}[section]
\newtheorem*{theorem*}{Theorem}
\newtheorem{cor}{Corollary}[section]
\newtheorem{lem}{Lemma}[section]
\newtheorem{defn}{Definition}[section]
\theoremstyle{remark}
\newtheorem{rem}{Remark}[section]
\title[Nonlocalized NLS solutions]{Instability in Nonlinear Schr\"odinger breathers}
\author{Claudio Mu\~noz}
\address{CNRS and Departamento de Ingenier\'{\i}a Matem\'atica and Centro
de Modelamiento Matem\'atico (UMI 2807 CNRS), Universidad de Chile, Casilla
170 Correo 3, Santiago, Chile.}
\email{claudio.munoz@math.u-psud.fr, cmunoz@dim.uchile.cl}
\thanks{C. M.  was partly funded by Chilean research grants FONDECYT  1150202, Fondo Basal CMM-Chile, and Millennium
Nucleus Center for Analysis of PDE NC130017}
\keywords{modulation, instability, well-posedness, Schr\"odinger,Peregrine, breather}
\subjclass{35Q55, 35Q51; 35Q35, 35Q40}
\begin{document}

\begin{abstract}
We consider the \emph{focusing} Nonlinear Schr\"odinger equation posed on the one dimensional line, with nonzero background condition at spatial infinity, given by a homogeneous plane wave. For this problem of physical interest, we study the initial value problem for perturbations of the background wave in Sobolev spaces. It is well-known that the associated linear dynamics for this problem describes a phenomenon known in the literature as \emph{modulational instability}, also recently related to the emergence of \emph{rogue waves} in ocean dynamics. In qualitative terms, small perturbations of the background state increase its size exponentially in time. In this paper we show that, even if  there is no time decay for the linear dynamics due to the modulationally unstable regime, the equation is still locally well-posed in $H^s$, $s>\frac12$. We apply this result to give a rigorous proof of the unstable character of two well-known NLS solutions: the Peregrine and Kuznetsov-Ma  breathers.  
\end{abstract}

\maketitle

\section{Introduction and Main results}

\subsection{Setting of the problem} In this paper, we consider the \emph{focusing} nonlinear Schr\"odinger equation (NLS) in one dimension:
\be\label{NLS}
i\partial_t u + \partial_x^2 u +|u|^{p-1} u=0, \quad u=u(t,x)\in \Com, \quad t,x\in \R.
\ee
Along this paper we assume $p >1$. The initial value problem (IVP) for \eqref{NLS} when the initial datum $u(t=0)=u_0$ is in standard Sobolev spaces $H^s(\R)$ is by now well-understood, in particular in the case of subcritical nonlinearities ($p<5$), starting with the fundamental works by Ginibre and Velo \cite{GV}, Tsutsumi \cite{Tsutsumi} and Cazenave and Weissler \cite{CW}, which showed global well-posedness (GWP) for $p<5$ and local well-posedness (LWP) if $p>1$. For a detailed description of the literature, including all historic developments, see e.g. the monographs by Cazenave \cite{Cazenave} and Linares and Ponce \cite{LP}. The case $p=3$ (cubic NLS) is particularly important because the equation is completely integrable, as showed by Zakharov and Shabat \cite{Zakharov0}. Additionally, this equation appears as a model of propagation of light in nonlinear optical fibers (with different meanings for time and space variables), as well as in small-amplitude gravity waves on the surface of deep inviscid water.

\medskip

As a complement to the previous results, concerning localized solutions only, in this work we are interested in constructing solutions to \eqref{NLS} for which the \emph{modulational instability} phenomenon is present. Being more precise, let us assume for simplicity that $p=3$, although our results hold for any $p>1$, provided the regularity in $H^s$ is properly chosen  (see Remark \ref{4.10.3}). Recall the standard \emph{Stokes wave} 
\be\label{Stokes}
u(t,x) := e^{it},
\ee
a non-localized, homogeneous solution of \eqref{NLS}. A complete family of standing waves can be obtained by using the scaling, phase and Galilean invariances of \eqref{NLS}: 
\be\label{standing_wave}
u_{c,v,\ga}(t,x):= \sqrt{c} \, \exp \Big(  ict + \frac i2 xv  -\frac i4 v^2 t + i\ga \Big).
\ee
This wave is another solution to \eqref{NLS}, for any scaling $c>0$, velocity $v\in \R$, and phase $\ga \in \R$. However, since all these symmetries represent invariances of the equation, they will not be essential in our proofs, and we will assume $c=1$, $v=\ga=0$.

\medskip

We now break the symmetry of the problem. Consider \emph{localized} perturbations of \eqref{Stokes} in \eqref{NLS}, of the form
\be\label{deco}
u(t,x) = e^{it}(1+ w(t,x)), \quad w~ \hbox{ unknown}.
\ee
(This rupture is motivated by some exact solutions to \eqref{NLS} discussed in subsection \ref{Breathers} below.) Then \eqref{NLS} becomes a modified NLS equation with a zeroth order term, which is real-valued, and has the wrong sign:
\be\label{mNLS}
\begin{aligned}
i\partial_t w + \partial_x^2 w  & + 2 \re w =~ G[w] , \\
 G[w]:= & - ( |1+ w|^{2}-1) (1+w) + 2 \re w  =  O(|w|^2).
\end{aligned}
\ee
The associated linearized equation for \eqref{mNLS} is just\footnote{This equation is similar to the well-known linear Sch\"odinger $i\partial_t w + \partial_x^2 w=0$, but instead of dealing with the additional term $2 \re w$ only as a perturbative term, we will consider all linear terms as a whole for later purposes (not considered in this paper), in particular, \emph{long time existence and decay issues}, see e.g. \cite{GNT1,GNT2}.}
\be\label{LS}
i\partial_t w + \partial_x^2 w  + 2 \re w =0.
\ee
Written only in terms of $\phi =\re w$, we have the wave-like equation (compare with \cite{Goodman} in the periodic setting)
\be\label{phi_eqn}
\partial_t^2 \phi+\partial_x^4 \phi + 2 \partial_x^2 \phi =0.
\ee
This problem has some instability issues, as reveal a standard frequency analysis: looking for a formal standing wave $\phi=e^{i(kx-\omega t)}$ solution to \eqref{phi_eqn}, one has
\[
\omega(k) = \pm |k| \sqrt{k^2 -2},
\]
which reveals that for small wave numbers ($|k|<\sqrt{2}$) the linear equation behaves in an ``elliptic'' fashion, and exponentially (in time) growing modes are present from small perturbations of the vacuum solution. A completely similar conclusion is obtained working in the Fourier variable, as we will see below (see Section \ref{2}). This singular behavior is not present if now the equation is defocusing, that is \eqref{NLS} with nonlinearity $-|u|^{2} u$.\footnote{Another model corresponds to the Gross-Pitaevskii equation: $i\partial_t u + \partial_x^2 u +u(1-|u|^2)=0$, for which the Stokes wave is modulationally stable.}
\medskip

\subsection{Well-posedness} The simple phenomenon exposed above is part of an intensely studied effect known as \emph{modulational instability}, which -roughly speaking- says that small perturbations of the exact solution \eqref{Stokes} are unstable and grow quickly. This unstable growth leads to a nontrivial competition with the (focusing) nonlinearity, time at which the solution is apparently stabilized. Some examples revealing this behavior are the breathers for the integrable NLS equation (\eqref{NLS} with $p=3$ above), the most famous being the Peregrine breather (or soliton) \cite{Peregrine}
\be\label{Peregrine}
P(t,x):=  e^{it}\Big(1-\frac{4(1+2it)}{1+ 4t^2 +2x^2}\Big).
\ee
This exact solution is a space-time localized, nontrivial perturbation of the Stokes wave, which appears and disappears from nowhere \cite{Akhmediev2}. Some interesting connections have been made between the Peregrine soliton \eqref{Peregrine} and the intensely studied subject of \emph{rogue waves} in ocean \cite{Zakharov,Shrira,Akhmediev2,Kliber0}. Very recently, Biondini and Mantzavinos \cite{BM} showed (see also \cite{BM1}), using inverse scattering techniques, the existence and long-time behavior of a global solution to \eqref{mNLS} in the {integrable} case $(p=3)$, but under certain exponential decay assumptions at infinity, and a \emph{no-soliton} spectral condition (which, as far as we understand, does not define an open subset of the space of initial data). Motivated by this fundamental result, we asked ourselves whether or not a suitable notion of solution for \eqref{mNLS} exists in standard Sobolev spaces, for which one can study solutions like \eqref{Peregrine}. Consequently, in this paper we show that, despite the fact that \eqref{LS} is not ``suitable well-posed'' as usually well-known dispersive models are, \eqref{mNLS} has local-in-time strong (and continuous in space) solutions in Sobolev spaces of fractional order.

\begin{thm}\label{MT}
The modulationally unstable equation \eqref{mNLS} is locally well-posed for any initial data in $H^s$, $s>\frac12$.
\end{thm}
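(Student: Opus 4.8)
The plan is to solve \eqref{mNLS} by a contraction-mapping argument on the Duhamel formulation, exploiting the fact that the modulational instability manifests itself only as a \emph{finite} exponential-in-time amplification of the linear flow, with \emph{no loss of derivatives}; on any finite interval $[0,T]$ such an amplification is harmless, and the focusing nonlinearity is controlled purely by the Banach-algebra structure of $H^s(\R)$ for $s>\frac12$. Thus I would work in $X_T := C([0,T];H^s(\R))$ and look for a fixed point of
\be
\Phi[w](t) := S(t)w_0 + \int_0^t S(t-s)\,N[w(s)]\,ds, \qquad N[w]:=-i\,G[w],
\ee
where $S(t)$ denotes the (merely $\R$-linear) solution operator of the linearized equation \eqref{LS}.

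First I would analyze $S(t)$ in Fourier variables. Writing $w=u+iv$ with $u=\re w$, $v=\im w$, equation \eqref{LS} becomes the real first-order system $\partial_t\widehat u=\xi^2\widehat v$, $\partial_t\widehat v=(2-\xi^2)\widehat u$, i.e.
\be
\partial_t\begin{pmatrix}\widehat u\\ \widehat v\end{pmatrix}=A(\xi)\begin{pmatrix}\widehat u\\ \widehat v\end{pmatrix},\qquad A(\xi)=\begin{pmatrix}0 & \xi^2\\ 2-\xi^2 & 0\end{pmatrix},
\ee
whose eigenvalues $\pm\sqrt{\xi^2(2-\xi^2)}$ are real (elliptic, growing) for $|\xi|<\sqrt2$ and purely imaginary (oscillatory) for $|\xi|>\sqrt2$, recovering the dispersion relation of the Introduction. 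Computing $e^{tA(\xi)}$ explicitly via $\cosh/\sinh$ in the elliptic zone and $\cos/\sin$ in the oscillatory zone, I would show that every entry of $e^{tA(\xi)}$ is bounded by some $C(T)$ \emph{uniformly in} $\xi$ for $t\in[0,T]$. The only delicate points are the transition $|\xi|\to\sqrt2$ and the high-frequency regime: there the off-diagonal entries carry factors $\xi^2\,\frac{\sin(\omega t)}{\omega}$ with $\omega=|\xi|\sqrt{\xi^2-2}$, which look singular, but using the identity $\xi^2=1+\sqrt{1+\omega^2}$ together with $|\sin(\omega t)|\le\min(1,\omega t)$ one checks they are in fact bounded by $C(1+t)$. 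Since the growing modes live in the compact set $\{|\xi|\le\sqrt2\}$, the amplification costs no powers of $\langle\xi\rangle$, and this yields the key linear bound $\|S(t)w\|_{H^s}\le C(T)\|w\|_{H^s}$ for all $t\in[0,T]$.

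Next I would record the nonlinear estimates. For $p=3$ one has the exact identity $G[w]=-2(\re w)\,w-|w|^2-|w|^2 w$, a polynomial in $w,\bar w$ with neither constant nor linear part, whence $G[w]=O(|w|^2)$. Because $H^s(\R)$ is a Banach algebra for $s>\frac12$, this gives
\be
\|G[w]\|_{H^s}\le C\big(\|w\|_{H^s}^2+\|w\|_{H^s}^3\big),
\ee
together with the matching Lipschitz bound $\|G[w]-G[\ttt w]\|_{H^s}\le C\big(\|w\|_{H^s}+\|\ttt w\|_{H^s}+\|w\|_{H^s}^2+\|\ttt w\|_{H^s}^2\big)\|w-\ttt w\|_{H^s}$. (For general $p>1$ the algebra product is replaced by a Moser-type composition estimate, which is precisely why $s$ must be adapted to $p$, cf.\ Remark \ref{4.10.3}.)

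Finally I would close the contraction. Combining the two previous steps,
\be
\|\Phi[w]\|_{X_T}\le C(T)\|w_0\|_{H^s}+C(T)\,T\big(\|w\|_{X_T}^2+\|w\|_{X_T}^3\big),
\ee
together with an analogous difference estimate that gains a factor $T$. Setting $\rho:=2C(T_0)\|w_0\|_{H^s}$ on the ball $\{\|w\|_{X_T}\le\rho\}$, and then choosing $T\le T_0$ small enough that $C(T)\,T\,(\rho+\rho^2)<\tfrac12$, the map $\Phi$ sends the ball into itself and is a contraction; its fixed point is the desired solution, while uniqueness, the blow-up alternative, and continuous dependence on $w_0$ follow in the standard way. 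The genuinely nonstandard point — and the one I expect to be the main obstacle — is the second step: establishing that the exponentially growing, non-self-adjoint generator produces a Fourier multiplier bounded \emph{without} derivative loss, which is exactly what allows an otherwise classical $s>\frac12$ argument to survive the modulationally unstable regime.
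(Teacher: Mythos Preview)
Your proposal is correct and follows essentially the same route as the paper: decompose into real and imaginary parts, solve the linear problem explicitly in Fourier with $\cosh/\sinh$ on $|\xi|<\sqrt2$ and $\cos/\sin$ on $|\xi|>\sqrt2$, observe that the propagator is bounded on $H^s$ by a constant $C(T)$ with \emph{no} derivative loss (the growth near $|\xi|=\sqrt2$ costing only a factor $\max\{1,|t|\}$, the exponential growth being confined to a compact frequency set), and then close by contraction using the algebra property of $H^s$ for $s>\tfrac12$. The only cosmetic difference is that the paper decouples into two second-order ODEs for $\hat\phi$ and $\hat\varphi$ separately (Lemma~\ref{Fourier_Linear} and Appendix~\ref{A}) rather than writing the $2\times2$ matrix exponential $e^{tA(\xi)}$ directly; the resulting formulas and estimates are identical.
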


See Theorem \ref{MT1} for a detailed statement of Theorem \ref{MT}. Compared with the results in \cite{BM}, we say less about the long-time dynamics, but we define a local-in-time flow for $w$ on a open set of the origin in $H^s$, with minimal assumptions. Note that $P$ in \eqref{Peregrine} is always well-defined, and has essentially no loss of regularity, confirming in some sense the intuition and the conclusions in Theorem \ref{MT}. Also, note that using the symmetries of the equation, we have LWP for any solution of \eqref{NLS} of the form
\[
u(t,x) = u_{c,v,\ga}(t,x) +w(t,x), \quad w\in H^s, \quad s>\frac12,
\]
with $u_{c,v,\ga}$ defined in \eqref{standing_wave}.
\medskip

The main feature in the proof of Theorem \ref{MT} is the fact that, if we work in Sobolev spaces, in principle \emph{there is no $L^1-L^\infty$ decay estimates for the linear dynamics}. Moreover, one has exponential growth in time of the $L^2$ norm, and therefore no suitable Strichartz estimates seems to be available, unless one cuts off some bad frequencies. Consequently, Theorem \ref{MT} is based in the fact that we work in dimension one, and that for $s>\frac12$, we have the inclusion $H^s \hookrightarrow  L^\infty$. 

\medskip

Let us discuss in more detail the weak form of instability present in equation \eqref{mNLS}, when working in Sobolev spaces. In order to understand why \eqref{mNLS} is still well-posed, consider the {\it backward heat equation} in $\R^d$ 
\be\label{Backwards}
\partial_t u = -\Delta u, \quad u=u(t,x) \in \R, \quad t\geq 0,
\ee
with initial datum $u(t=0)=u_0 \in \mathcal S(\R^d)$, the standard Schwartz class. A simple Fourier analysis reveals that the solution is given by the representation
\[
u (t,x) = const.\int e^{ix\cdot \xi } e^{|\xi|^2 t} \hat u_0(\xi)d\xi.   
\] 
Even if $\hat u_0 \in \mathcal S(\R^d)$, for $t>0$ sufficiently large there are initial data $u_0$ for which   $ e^{|\xi|^2 t} \hat u_0(\xi)$ becomes unbounded in $\xi$, and the solution ceases to exist in the original Schwartz class. Therefore, \eqref{Backwards} is ill-posed in the Hadamard's sense \cite{Hadamard}. However, if now $\hat u_0$ has compact support in $\xi$, one has the uniform (in space) estimate
\be\label{Expo}
\Big| \int e^{ix\cdot \xi } e^{|\xi|^2 t} \hat u_0(\xi)d\xi \Big|~ \lesssim  e^{M^2 t}, \quad \supp \hat u_0 \subseteq B(0,M).
\ee
Therefore one has exponential growth in time, but the solution is fortunately well-defined for all time. A similar conclusion can be reached in the case of \eqref{phi_eqn}: the ``ill-posed'' regime in the Fourier space corresponds to the bounded region $|\xi|\leq \sqrt{2}$, while for $|\xi|>\sqrt{2}$ the equation is naturally dispersive, with standard decay estimates. As in \eqref{Expo}, the price to pay when dealing with \eqref{phi_eqn} is an exponential growth in time at the linear level (even for energy estimates), which reduces the well-posedness result to the only \emph{general} regime where one can close the fixed point iteration of global estimates\footnote{Possibly, one can also prove existence of globally defined weak solutions in $H^1(\R)$, as in Leray \cite{Leray} (Navier-Stokes), and Kato \cite{Kato} (modified KdV).}: the subcritical Sobolev setting, in this case $H^s$, $s>\frac12.$ 

\begin{rem}
Note that \eqref{mNLS} differs from another well-known, ill-posed fluid equation, the ``bad'' Boussinesq model \cite[p. 66]{KL}:
\[
\partial_t^2 u -  \partial_x^4u -\partial_x^2 u +  \partial_x^2(u^2)=0,
\]
for which the linear problem, in the Fourier space, reads (compare with \eqref{phi_eqn} and \eqref{AA})
\[
\partial_t^2 \hat u + \xi^2(1-\xi^2)\hat u =0.
\]
Here, the set of bad frequencies is unbounded: $\{|\xi|\geq 1\}$. 
\end{rem}

\begin{rem}
A related local well-posedness result as in Theorem \ref{MT} has been established for \eqref{NLS} in the \emph{Zhidkov space}, or energy space
\be\label{Zhidkov_space}
\mathcal E := \{ u\in L^\infty(\R) ~ : ~ \partial_x u \in L^2(\R), ~ |u|^2-1 \in L^2(\R) \},
\ee
by following the ideas by Zhidkov \cite{Zhidkov}, see also Gallo \cite{Gallo,Gallo2}, and G\'erard \cite{Gerard}, valid for any subcritical-critical dimension. These papers are devoted to the defocusing case (the Gross-Pitaevskii equation), where they also obtain global well-posedness thanks to the nonnegativity of the energy. When $s=1$ in Theorem \ref{MT}, the space $\mathcal E$ contains the solution space $e^{it}(1 + H^1(\R))$, but for $s\in (\frac12,1)$ there is no evident relationship between these spaces. Also, scattering results for Gross-Pitaevskii equations are proved by Gustafson, Nakanishi and Tsai in \cite{GNT1,GNT2}. Some of the linear dynamics in Section \ref{2} is related to the one in these works, the main difference being the modulationally unstable character of the evolution problem considered in this work. 
\end{rem}

\begin{rem}
After this work was completed, we learned about an alternative explanation to the rogue wave phenomena, given by the notion of \emph{dispersive blow-up} (see Bona and Saut \cite{Bona_Saut}), in which the solution, even if it is well-defined in $L^2$-based Sobolev spaces, has $L^\infty$ norm becoming unbounded in finite time. This argument works for linear and nonlinear equations as well. Therefore, a necessary condition for getting dispersive blow-up is an $H^s$ regularity where $s<\frac d2$, $d$ being the dimension of space, which does not fit our LWP assumptions. For further improvements and applications of these ideas to other dispersive models, see the work by Bona et al. \cite{BPSS}.  
\end{rem}

Two interesting (mathematical) questions left open in this work are the following: global existence vs. blow-up (see Remark \ref{GE} for more details), and well-posedness and ill-posedness of the flow map for lower regularities. Note that the ill-posedness method developed by Kenig-Ponce-Vega \cite{KPV} does not apply since scaling and Galilean transformations require infinite $H^s$ energy. 

\medskip

We will apply Theorem \ref{MT} to show some additional results, in particular, stability issues for particular NLS soliton solutions appearing when $p=3$. 

\subsection{More about modulational instability}\label{Breathers} In addition to the Stokes wave \eqref{Stokes}, it is also known that the Peregrine soliton has a modulational instability property \cite{Khawaja}, and it is numerically unstable under small perturbations, see the work by Klein and Haragus \cite{KH}. Since the equation is locally well-posed and continuous in time, it is possible to define a notion of orbital stability for the Peregrine breather, but also for more general solutions. 

\medskip

Fix $s>\frac12$, and $t_0\in \R$. We say that a particular globally defined solution $U=e^{it}(1+W)$ of \eqref{NLS} is \emph{orbitally stable} in $H^s$ if there are constants $C_0,\ve_0>0$ such that, for any $0<\ve<\ve_0$,  
\be\label{Stability}
\begin{aligned}
\| w_0 - & W(t_0)\|_{H^s} <  \ve\\
& \Downarrow  \\
\exists ~ x_0(t)\in \R ~  \hbox{such that }~ &  \sup_{t\in \R}  \|w(t) -W(t,x-x_0(t)) \|_{H^s} <C_0\, \ve.
\end{aligned}
\ee

\medskip
\noindent
Here $w(t)$ is the solution to the IVP \eqref{mNLS} with initial datum $w(t_0)=w_0$, constructed in Theorem \ref{MT}, and $x_0(t)$ can be assumed continuous because the IVP is well-posed in a continuous-in-time Sobolev space.\footnote{Note that no phase correction is needed in \eqref{Stability}: equation \eqref{mNLS} is no longer $U(1)$ invariant, and any phase perturbation of a modulationally unstable solution $u(t)$ in \eqref{NLS}, of the form $u(t) e^{i\ga}$, $\ga\in \R$, requires an infinite amount of energy. The same applies for Galilean transformations.} If \eqref{Stability} is not satisfied, we will say that $U$ is {\bf unstable}. Note additionally that condition \eqref{Stability} requires $w$  globally defined, otherwise $U$ is trivially unstable, since $U$ is globally defined.

\medskip

Recall that NLS solitons on a zero background satisfy \eqref{Stability} (with an additional phase correction) for $s=1$ and all $L^2$ subcritical nonlinearities, see e.g. \cite{CL,GSS,Weinstein}.

\medskip
Let us come back to the Peregrine breather \eqref{Peregrine}.  The following result, which is the main result of this paper, quantifies the instability of the Peregrine breather.

\begin{thm}\label{Insta_Per}
The Peregrine breather \eqref{Peregrine} is unstable under small $H^s$ perturbations, $s>\frac12$.
\end{thm}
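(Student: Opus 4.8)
The plan is to establish instability by placing near the Peregrine datum a family of \emph{exact} solutions of \eqref{NLS} that \emph{recur} in time, and to contrast this recurrence with the decay of the Peregrine breather. The natural candidates are the Kuznetsov--Ma breathers $Q_a = e^{it}(1+W_a)$, a one-parameter family of spatially localized, \emph{time-periodic} exact solutions of \eqref{NLS} of period $T(a)$, which degenerate to the Peregrine breather $P=e^{it}(1+W)$ of \eqref{Peregrine} in the infinite-period limit $a\to a_*$, where $T(a)\to\infty$. Since each $W_a$ decays to the background as $\abs{x}\to\infty$, we have $W_a\in C(\R;H^s)$, and by the uniqueness part of Theorem \ref{MT} the local flow issued from the datum $W_a(0)$ coincides with the globally defined profile $W_a(t)$.

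First I would normalize the family so that both $W_a$ and $W$ attain their space-time peak at $(t,x)=(0,0)$, and prove the \emph{data convergence}
\[
\norm{W_a(0)-W(0)}_{H^s}\longrightarrow 0\qquad\text{as } a\to a_*.
\]
This is the main analytic input, and the place where the explicit breather formulas are used. Next, using time-periodicity, $W_a(T(a))=W_a(0)$, so $\norm{W_a(T(a))}_{H^s}\to\norm{W(0)}_{H^s}=:c_1>0$, the peak being nontrivial. On the other hand, a direct computation on the rational form \eqref{Peregrine} gives $\norm{W(t)}_{H^s}\to 0$ as $t\to+\infty$; since $T(a)\to\infty$, this forces $\norm{W(T(a))}_{H^s}\to 0$.

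These two facts produce the separation. For any admissible modulation $x_0(t)$, translation invariance of the $H^s$ norm yields
\[
\norm{W_a(T(a))-W(T(a),\cdot-x_0(T(a)))}_{H^s}\ \ge\ \norm{W_a(T(a))}_{H^s}-\norm{W(T(a))}_{H^s}\ \ge\ \tfrac{c_1}{2}
\]
for $a$ close enough to $a_*$, so $\sup_t\norm{w(t)-W(t,\cdot-x_0(t))}_{H^s}\ge c_1/2$ regardless of $x_0$. Setting $\ve_a:=\norm{W_a(0)-W(0)}_{H^s}\to 0$, given any constants $C_0,\ve_0$ I would choose $a$ near $a_*$ with $\ve_a<\min(\ve_0,\,c_1/(2C_0))$; then the datum $w_0=W_a(0)$ lies within $\ve_a<\ve_0$ of $W(0)$, yet its \emph{globally defined} solution deviates from the Peregrine orbit by at least $c_1/2>C_0\ve_a$, contradicting \eqref{Stability}. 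This is exactly the failure of orbital stability asserted by the theorem.

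The main obstacle is the data-convergence step: one must show that the Kuznetsov--Ma profiles converge to the Peregrine profile not merely pointwise but in $H^s$, uniformly controlling both the local shape and the spatial tails (and their $s$ fractional derivatives) as $a\to a_*$. A secondary, routine point is the quantitative decay $\norm{W(t)}_{H^s}\to 0$, which follows from the explicit form of \eqref{Peregrine} together with the embedding $H^s\hookrightarrow L^\infty$ for $s>\tfrac12$ used throughout. The translation parameter $x_0(t)$ cannot rescue stability here precisely because the obstruction is a genuine \emph{amplitude} recurrence of a localized profile, not a spatial drift, so the crude triangle-inequality bound above already suffices.
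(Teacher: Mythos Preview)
Your approach is correct in outline, but it takes a genuinely different and considerably heavier route than the paper's. You fix the initial time $t_0=0$ and use the one-parameter family of Kuznetsov--Ma breathers $W_a$ as nearby data, relying on (i) the $H^s$ convergence $\|W_a(0)-W(0)\|_{H^s}\to 0$ as $a\to a_*$, (ii) the exact periodicity $W_a(T(a))=W_a(0)$ with $T(a)\to\infty$, and (iii) the decay $\|W(t)\|_{H^s}\to 0$. Step (i), which you rightly flag as the main obstacle, is a nontrivial computation: as $a\to\tfrac12$ the KM profile has exponential tails $e^{-\beta|x|}$ with $\beta\to 0$, and one must show these converge in $H^s$ to the polynomially decaying Peregrine profile, controlling both the local shape and the tails uniformly.

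The paper bypasses this entirely by exploiting the freedom in the \emph{initial time} $t_0$ allowed in the stability definition \eqref{Stability}. It takes as perturbation the simplest possible solution, the Stokes wave itself (i.e.\ $w\equiv 0$), and chooses $t_0=T$ large. Since $\|Q(T)\|_{H^s}\to 0$ as $T\to\infty$ (your step (iii)), the datum $w_0=0$ is arbitrarily close to $W(T)=Q(T)$; but the corresponding solution is identically zero, while $\|Q(0,\cdot-x_0)\|_{H^s}=c_0>0$ is a fixed constant independent of $x_0$. This immediately violates \eqref{Stability} with no limiting procedure on a family of breathers and no $H^s$-convergence analysis. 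What your approach buys is that it works with $t_0$ fixed at the peak time and produces a \emph{recurrent} competitor, which is conceptually appealing; what the paper's approach buys is that the entire argument reduces to the single elementary fact $\|Q(t)\|_{H^s}\to 0$, which you already need anyway. Interestingly, the paper reserves the two-breather idea (nonlinear superposition) for the Kuznetsov--Ma instability in Theorem~\ref{Insta_KM}, where no simpler competitor is available.
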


This result is in contrast with other positive results involving breather solutions \cite{AM,AM1,AM2,Munoz}. In those cases, the involved equations (mKdV, sine-Gordon) were globally well-posed in the energy space (and even in smaller subspaces), with uniform in time bounds. Several physical and computational studies on the Peregrine breather can be found in \cite{Dysthe,Brunetti} and references therein. The proof of Theorem \ref{Insta_Per} will be a direct application of the notion of modulational instability together with an asymptotic stability property. It will be also clear from the proof that Peregrine breathers cannot be asymptotically stable.

\medskip

There is an additional oscillatory mode for \eqref{NLS}-\eqref{deco}. This mode corresponds to a perturbation of the Stokes wave that is localized in space, and periodic in time. Assume $a>\frac12$. The Kuznetsov-Ma (KM) breather \cite{Kuznetsov,Ma} is given by the compact expression \cite{Akhmediev}
\be\label{KM}
\begin{aligned}
B(t,x): = & ~ e^{it}\Bigg[  1- \sqrt{2}\beta \frac{(\beta^2 \cos(\al t)  + i\al \sin(\al t)) }{ \al \cosh(\beta x) - \sqrt{2} \beta \cos(\al t)}\Bigg], \\
 \al := &~ (8a(2a-1))^{1/2},\quad \beta := (2(2a-1))^{1/2}. \\
\end{aligned}
\ee
In the formal limit $a \downarrow \frac12$ one recovers the Peregrine breather.\footnote{Note that $  \frac{\al}{\sqrt{2}\beta} = \sqrt{2a} >1
$, therefore $B$ in \eqref{KM} is always well-defined.} Note that $B$ is a Schwartz perturbation of the Stokes wave, and therefore a smooth classical solution of \eqref{NLS}.  It has been also observed in optical fibre experiments, see Kliber et al. \cite{Kibler} and references therein for a complete background on the mathematical problem and its physical applications. However, using a similar argument as in the proof of Theorem \ref{Insta_Per}, we will show that this mode cannot be stable, at least in our current definition of stability.

\begin{thm}\label{Insta_KM}
All Kuznetsov-Ma breathers are unstable under small $H^{s}$, $s>\frac12$ perturbations.
\end{thm}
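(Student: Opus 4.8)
The plan is to argue by contradiction, exploiting that \eqref{KM} is periodic in time with a frequency $\al=\al(a)$ that depends nontrivially on the parameter $a$. As in the proof of Theorem~\ref{Insta_Per}, the idea is to compare the given breather with a nearby \emph{exact} solution of \eqref{mNLS}; but rather than the Stokes wave $w\equiv0$ (which is never $H^s$-close to a Kuznetsov--Ma profile, since $\|W_B(t)\|_{H^s}$ stays bounded below for a fixed $a>\tfrac12$), I would use a second Kuznetsov--Ma breather $B_{a'}$ with a slightly detuned parameter $a'\neq a$. Note first that a pure time-translation $W_B(\cdot+\tau,\cdot)$ cannot work: by periodicity it remains within $O(\tau)$ of $W_B$ for all time, behaving as a neutral mode. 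Detuning $a$ instead changes the period, and this is precisely what forces the two orbits to separate.

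Concretely, fix $s>\tfrac12$ and $t_0$, write the target breather as $U=e^{it}(1+W_{B_a})$, and suppose it were orbitally stable in the sense of \eqref{Stability}. The steps are: (i) since $a'\mapsto W_{B_{a'}}(t_0)$ is continuous into $H^s$ (the profile in \eqref{KM} is smooth, exponentially localized, and depends smoothly on $\al(a')$ and $\beta(a')$), for $a'$ close enough to $a$ the competitor $w_0:=W_{B_{a'}}(t_0)$ satisfies $\|w_0-W_{B_a}(t_0)\|_{H^s}<\ve$, and by uniqueness in Theorem~\ref{MT} the corresponding solution is the globally defined $w(t)=W_{B_{a'}}(t)$; (ii) the frequency $\al(a)=(8a(2a-1))^{1/2}$ is strictly increasing for $a>\tfrac12$, so $\al(a')\neq\al(a)$ and the phase mismatch $(\al(a')-\al(a))t$ grows without bound, producing a time $t_1$ at which the two temporal phases are exactly antipodal; (iii) at such $t_1$ the two profiles are, up to an $O(|a'-a|)$ error in the spatial parameters, single humps of \emph{opposite sign} concentrated at the core, so their $H^s$ difference is bounded below by a fixed constant $c_*>0$ independent of $\ve$. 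Choosing $\ve$ with $C_0\ve<c_*$ then contradicts \eqref{Stability}.

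The main obstacle is step (iii): one must bound the orbital distance from below \emph{uniformly over the admissible spatial shift} $x_0(t_1)$, i.e. rule out that translating $W_{B_a}(t_1,\cdot-x_0)$ could realign it with the antiphase competitor. Since both profiles are exponentially concentrated near $x=0$ with $O(1)$ mass, a large shift makes their essential supports disjoint (distance $\gtrsim\|W_{B_{a'}}(t_1)\|_{L^2}$), while for bounded shifts the explicit opposite-sign structure of \eqref{KM} at antipodal phases gives the bound directly; combining the two regimes yields $\inf_{x_0}\|W_{B_{a'}}(t_1)-W_{B_a}(t_1,\cdot-x_0)\|_{H^s}\geq c_*$. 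A secondary point is to make the lower bound uniform in the common phase $\theta=\al(a)t_1$, which follows from $|\beta^2\cos\theta+i\al\sin\theta|\geq\min(\beta^2,\al)>0$. The same construction shows in passing that no Kuznetsov--Ma breather can be asymptotically stable, since the detuned solution neither converges to nor remains near the orbit of $B_a$.
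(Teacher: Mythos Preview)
Your approach is correct and takes a genuinely different route from the paper's proof. The paper does not detune the parameter $a$; instead it invokes the integrable structure to produce an explicit two-component exact solution $\tilde B$ describing the nonlinear superposition of the Kuznetsov--Ma breather $B$ with a Peregrine breather (taken from \cite{Akhmediev3}), and writes $\tilde B(t)=B(t)+\tilde P(t)$ with $\|\tilde P(t)\|_{H^s}\to 0$ as $t\to+\infty$. The argument then mirrors the Peregrine case exactly: choosing the initial time $t_0=T$ large makes $\|\tilde B(T)-B(T)\|_{H^s}$ as small as desired, while at time $0$ the two solutions are a fixed distance $c_1>0$ apart in $H^s$ regardless of the shift $x_0(0)$.

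Your argument avoids the explicit two-breather formula and uses only the one-parameter family \eqref{KM} itself, which is more self-contained; the price is step~(iii), where you must produce a lower bound at antipodal phases that is uniform both in the shift $x_0$ and in the common phase $\theta=\al(a)t_1$. Your outline for this is sound: evenness in $x$ together with the common exponential decay rate forces any identity $W_{B_a}(\theta+\pi,\cdot)=W_{B_a}(\theta,\cdot-x_0)$ to have $x_0=0$, and direct evaluation at $x=0$ (using $\al_a>0$ and $|\beta^2\cos\theta+i\al\sin\theta|>0$) then rules this out, so a compactness argument in $(\theta,x_0)$ yields $c_*>0$ independent of $a'$. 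The paper's route is shorter because the asymptotic convergence $\tilde P\to 0$ does the work of your phase-mismatch and compactness steps in one stroke, but it imports an external exact solution from the integrable theory that your approach does not need.
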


The (formally) unstable character of Peregrine and Kuznetsov-Ma breathers was well-known in the physical and fluids literature (they arise from modulational instability), therefore the rigorous conclusions in Theorems \ref{Insta_Per} and \ref{Insta_KM} are somehow not surprising. However, in several water tanks or optic fiber experiments, researchers are able to reproduce these waves \cite{Brunetti,Kliber0,Kibler}, if e.g. the initial setting or configuration is close to the exact theoretical solution. An interesting physical and mathematical question should be to generalize the notion of stability in \eqref{Stability} in order to encompass the physical occurrence of these phenomena.

\medskip

We finish this section by recalling that NLS \eqref{NLS} possesses a third oscillatory mode, the Akhmediev breather \cite{Akhmediev}
\[
\begin{aligned}
A(t,x):=& ~ e^{it}\Bigg[  1+ \frac{\alpha^2 \cosh( \beta t) +i\beta \sinh(\beta t) }{ \sqrt{2a} \cos(\alpha x) -\cosh(\beta t)}\Bigg],\\
 \beta=&~ (8a(1-2a))^{1/2}, \quad \alpha=(2(1-2a))^{1/2},\quad a<\frac12,
\end{aligned}
\]  
In the limiting case $a \uparrow \frac12$ one can recover the Peregrine soliton \eqref{Peregrine}. Unlike Peregrine and Kuznetsov-Ma breathers, this solution is  periodic in space, and localized in time. The three breathers presented along this work can be seen more clearly in \cite{Youtube}. It would be interesting to study the stability of this solution, as the previous modes. The Cauchy problem in the periodic setting should be treated as is done in this work. We conjecture that this solution should be unstable under suitable periodic perturbations, for similar reasons to the previously mentioned.

\medskip

\noindent
{\bf Organization of this paper.} This paper is organized as follows. In Section \ref{2} we describe the linear dynamics in terms of its Fourier estimates. Section \ref{3} is devoted to the proof of Theorem \ref{MT}, while Section \ref{4} deals with the instability results in Theorems \ref{Insta_Per} and \ref{Insta_KM}.

\medskip

\noindent
{\bf Acknowledgments.} C.M. is indebted to M.A. Alejo, D. Eeltink and C. Sparber for many suggestions and clarifying discussions about modulational instability, NLS breathers, and dispersive blow-up.

\bigskip

\section{Fourier description of the linear dynamics}\label{2}

The purpose of this section is to obtain Fourier estimates for solutions $w$ to the linear problem
\be\label{Linear}
i\partial_t w +\partial_x^2w +2\re w =G, \qquad w(t=0)=w_0.
\ee
Along this section, and in the remaining part of this paper, we will use the following notation:
\be\label{deco_u0}
w = \re w + i\ima  w =:\phi + i \varphi, \quad  w_0 =: \phi_0+ i \varphi_0,
\ee
and
\be\label{deco_F}
G= \re G+ i \ima G= : f+ig,
\ee
where $\phi,\varphi, \phi_0, \varphi_0,f,g$ are real-valued functions. Recall that $\hat f(\xi) = \mathcal F[f](\xi)$ represents the Fourier transform of $f=f(x)$. We will always assume $t\geq 0$, but all our estimates are valid for $t<0$, if we use absolute value when necessary.

\begin{lem}\label{Fourier_Linear}
Assume that $w=\phi + i \varphi$ solves \eqref{Linear} with \eqref{deco_u0} and \eqref{deco_F} satisfied. Then we have the representation
\be\label{Duhamel}
 \phi = \Phi [f,g ; \phi_0,\varphi_0], \quad \varphi  =\Psi[f,g ; \phi_0,\varphi_0],
\ee
where
\ben
\item for frequencies $|\xi| \leq \sqrt{2}$, 
\be\label{phi_low}
\begin{aligned}
\mathcal F[ \Phi [f,g ; \phi_0,\varphi_0]] (t,\xi) =&~ \cosh(|\xi| \sqrt{2-\xi^2}~ t) \hat\phi_0(\xi) ~ + \frac{ \sinh(|\xi| \sqrt{2-\xi^2}~ t)}{|\xi|\sqrt{2-\xi^2}} \xi^2 \hat\varphi_0(\xi) \\
& ~ + \int_0^t \cosh(|\xi| \sqrt{2-\xi^2} (t-\sigma))\hat g(\sigma,\xi) d\sigma \\
& ~  -\int_0^t  \frac{ \sinh(|\xi| \sqrt{2-\xi^2} (t-\sigma))}{|\xi|\sqrt{2-\xi^2}} \xi^2 \hat f(\sigma,\xi)d\sigma,
\end{aligned}
\ee
and
\be\label{varphi_low}
\begin{aligned}
\mathcal F[\Psi[f,g ; \phi_0,\varphi_0]] (t,\xi) =&~ \cosh(|\xi|\sqrt{2-\xi^2} ~ t) \hat\varphi_0(\xi)  ~+ \frac{\sinh(|\xi|\sqrt{2-\xi^2} ~ t) }{|\xi| \sqrt{2-\xi^2}} (2- \xi^2) \hat\phi_0(\xi)  \\
 & ~- \int_0^t  \cosh(|\xi|\sqrt{2-\xi^2} (t-\sigma)) \hat f(\sigma,\xi) d\sigma\\
 &~ + \int_0^t  \frac{\sinh(|\xi|\sqrt{2-\xi^2} (t-\sigma)) }{|\xi| \sqrt{2-\xi^2}}(2-  \xi^2) \hat g(\sigma,\xi)d\sigma.
\end{aligned}
\ee
\medskip

\item For frequencies $|\xi| > \sqrt{2}$, 
\be\label{phi_high}
\begin{aligned}
\mathcal F[ \Phi [f,g ; \phi_0,\varphi_0]] (t,\xi)=&~ \cos(|\xi| \sqrt{\xi^2-2}~ t) \hat\phi_0(\xi) ~ + \frac{ \sin(|\xi| \sqrt{\xi^2-2}~ t)}{|\xi|\sqrt{\xi^2-2}}  \xi^2 \hat\varphi_0 (\xi) \\
& ~ + \int_0^t \cos(|\xi| \sqrt{\xi^2-2} (t-\sigma))\hat g(\sigma,\xi) d\sigma \\
& ~  -\int_0^t  \frac{ \sin(|\xi| \sqrt{\xi^2-2} (t-\sigma))}{|\xi|\sqrt{\xi^2-2}} \xi^2 \hat f(\sigma,\xi)d\sigma,
\end{aligned}
\ee
and
\be\label{varphi_high}
\begin{aligned}
\mathcal F[ \Psi [f,g ; \phi_0,\varphi_0]] (t,\xi) =&~ \cos(|\xi|\sqrt{\xi^2-2} ~ t) \hat\varphi_0(\xi) ~+ \frac{\sin(|\xi|\sqrt{\xi^2-2} ~ t) }{|\xi| \sqrt{\xi^2-2}} (2- \xi^2) \hat\phi_0(\xi) \\
 & ~- \int_0^t  \cos(|\xi|\sqrt{\xi^2-2} (t-\sigma)) \hat f(\sigma,\xi) d\sigma\\
 &~ + \int_0^t  \frac{\sin(|\xi|\sqrt{\xi^2-2} (t-\sigma)) }{|\xi| \sqrt{\xi^2-2}} (2-\xi^2)\hat g(\sigma,\xi)  d\sigma.
\end{aligned}
\ee

\een

\end{lem}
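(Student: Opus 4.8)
The plan is to reduce \eqref{Linear} to a decoupled (in the frequency $\xi$) family of inhomogeneous constant-coefficient linear ODEs in $t$, integrate them explicitly, and simply read off the four kernels. First I would insert the decompositions \eqref{deco_u0} and \eqref{deco_F} into \eqref{Linear} and separate real and imaginary parts. Since $\re w=\phi$ feeds only the scalar zeroth-order term $2\re w$, a one-line computation produces the coupled real first-order system
\be
\partial_t \phi = -\partial_x^2 \varphi + g, \qquad \partial_t \varphi = (\partial_x^2+2)\phi - f .
\ee
Taking the spatial Fourier transform and using $\mathcal F[\partial_x^2 h]=-\xi^2\hat h$ decouples the $\xi$-fibers and yields, for a.e.\ fixed $\xi$,
\be
\partial_t \begin{pmatrix}\hat\phi\\ \hat\varphi\end{pmatrix} = A(\xi)\begin{pmatrix}\hat\phi\\ \hat\varphi\end{pmatrix} + \begin{pmatrix}\hat g\\ -\hat f\end{pmatrix}, \qquad A(\xi):=\begin{pmatrix}0 & \xi^2\\ 2-\xi^2 & 0\end{pmatrix},
\ee
with initial value $(\hat\phi_0(\xi),\hat\varphi_0(\xi))$ at $t=0$.

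The key algebraic observation is that $A(\xi)^2=\xi^2(2-\xi^2)\,I=:\lambda^2 I$, so the exponential series collapses and
\be
e^{tA(\xi)} = \cosh(\lambda t)\,I + \frac{\sinh(\lambda t)}{\lambda}\,A(\xi) = \begin{pmatrix}\cosh(\lambda t) & \frac{\sinh(\lambda t)}{\lambda}\xi^2\\ \frac{\sinh(\lambda t)}{\lambda}(2-\xi^2) & \cosh(\lambda t)\end{pmatrix}.
\ee
The sign of $\lambda^2=\xi^2(2-\xi^2)$ dictates the two regimes. For $|\xi|<\sqrt2$ one has $\lambda=|\xi|\sqrt{2-\xi^2}\in\R$ and the genuinely hyperbolic $\cosh/\sinh$ behavior of \eqref{phi_low}--\eqref{varphi_low}; for $|\xi|>\sqrt2$ the eigenvalues are imaginary, and setting $\mu:=|\xi|\sqrt{\xi^2-2}$ the identities $\cosh(\lambda t)=\cos(\mu t)$ and $\sinh(\lambda t)/\lambda=\sin(\mu t)/\mu$ turn the same matrix into the oscillatory kernels of \eqref{phi_high}--\eqref{varphi_high}.

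Finally I would apply Duhamel's formula (variation of parameters), writing the solution of the ODE system as $e^{tA}(\hat\phi_0,\hat\varphi_0)^{\mathsf T}+\int_0^t e^{(t-\sigma)A}(\hat g(\sigma),-\hat f(\sigma))^{\mathsf T}\,d\sigma$; reading off the two components against the explicit matrix above reproduces exactly the homogeneous and integral terms of \eqref{phi_low}, \eqref{varphi_low}, \eqref{phi_high} and \eqref{varphi_high}. There is no deep analytic obstacle here, as the statement is pointwise in $\xi$ for a constant-coefficient ODE; the only bookkeeping points requiring care are (i) tracking the asymmetric off-diagonal entries $\xi^2$ versus $2-\xi^2$ of $A(\xi)$ together with the sign carried by the $\hat f$ forcing, and (ii) the degenerate frequency $|\xi|=\sqrt2$, where $\lambda=0$ and the kernels are defined by the removable limit $\sinh(\lambda t)/\lambda\to t$ (a null set of frequencies that is in any case irrelevant for the $L^2$-based estimates used later).
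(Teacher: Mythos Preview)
Your argument is correct. The route differs from the paper's in a noteworthy way. The paper starts from the same first-order real system but then eliminates $\varphi$ (resp.\ $\phi$) to obtain a scalar second-order ODE in $t$ for $\hat\phi$ (resp.\ $\hat\varphi$), namely $\partial_t^2\hat\phi+\xi^2(\xi^2-2)\hat\phi=-\xi^2\hat f+\partial_t\hat g$; this introduces a time derivative of the forcing, which the paper then removes by an integration by parts in $\sigma$, causing a boundary term at $\sigma=0$ that cancels against the corresponding piece of the initial velocity $\hat\phi_1=\xi^2\hat\varphi_0+\hat g(0,\xi)$. Your matrix-exponential approach, exploiting $A(\xi)^2=\xi^2(2-\xi^2)I$, keeps the problem first-order and so never sees $\partial_t\hat f$, $\partial_t\hat g$ or the compensating boundary terms; it is shorter and makes the asymmetry between the $\xi^2$ and $2-\xi^2$ off-diagonal entries transparent. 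The paper's scalar reduction, on the other hand, connects directly to the wave-like equation \eqref{phi_eqn} highlighted in the introduction. Both arrive at identical formulas, and your remarks on the $|\xi|=\sqrt2$ degeneracy and the bookkeeping of signs are exactly the points that need attention.
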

\begin{proof}
For the proof of this result, see Appendix \ref{A}.
\end{proof}

\begin{rem}
Note that the ``bad case'' $|\xi| \leq \sqrt{2}$ in \eqref{phi_low}-\eqref{varphi_low} contains an exponential growth (in time) for frequencies $|\xi|\sim 1$. This sort of ``weak'' ill-posedness behavior seems being impossible to avoid, but fortunately it is only present in a compact set of frequencies. Note also that we could have had initial data with support contained in the region $|\xi|>\sqrt{2}$ of the Fourier space, however, such a property is apparently not preserved by the nonlinear dynamics. 
\end{rem}

\subsection{Energy estimates for low frequencies}

Now we prove some simple energy estimates for the unstable case, namely $|\xi|\leq \sqrt{2}$. We first deal with the ``homogeneous'' estimates.

\begin{lem} Let $s\geq 0$ be a real-valued number, and $t\geq 0$. Consider the symbols introduced in \eqref{phi_low}-\eqref{varphi_low}. Then we have
\be\label{Estimate_11}
\norm{|\xi|^s \cosh(|\xi| \sqrt{2-\xi^2}~ t) \hat\phi_0(\xi)}_{L^2(|\xi|\leq \sqrt{2})} \lesssim_s \cosh t \norm{\hat\phi_0(\xi)}_{L^2(|\xi|\leq \sqrt{2})},
\ee
\be\label{Estimate_12}
\begin{aligned}
& \norm{ |\xi|^s \frac{ \sinh(|\xi| \sqrt{2-\xi^2} ~t)}{|\xi|\sqrt{2-\xi^2}} \xi^2 \hat\varphi_0 (\xi)}_{L^2(|\xi|\leq \sqrt{2})}  \lesssim_s  \sinh t \norm{\hat\varphi_0(\xi)}_{L^2(|\xi|\leq \sqrt{2})},
\end{aligned}
\ee
and finally

\be\label{Estimate_13}
\begin{aligned}
& \norm{ |\xi|^s\frac{\sinh(|\xi|\sqrt{2-\xi^2} ~ t) }{|\xi| \sqrt{2-\xi^2}} (2- \xi^2) \hat\phi_0(\xi)}_{L^2(|\xi|\leq \sqrt{2})} ~ \lesssim_s \sinh t \norm{\hat\varphi_0(\xi)}_{L^2(|\xi|\leq \sqrt{2})} .
\end{aligned}
\ee
\end{lem}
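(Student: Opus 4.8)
The plan is to reduce each of the three bounds to a pointwise (in $\xi$) estimate on the corresponding Fourier symbol. Since all the norms already live on the Fourier side, for any multiplier $m(\xi)$ one has directly
\[
\norm{m(\xi)\hat h(\xi)}_{L^2(|\xi|\leq\sqrt{2})}\leq \Big(\sup_{|\xi|\leq\sqrt{2}}|m(\xi)|\Big)\norm{\hat h(\xi)}_{L^2(|\xi|\leq\sqrt{2})},
\]
so everything comes down to uniformly bounding the three symbols on the compact band $|\xi|\leq\sqrt{2}$. The central observation I would record first is the behaviour of the frequency function $\mu(\xi):=|\xi|\sqrt{2-\xi^2}$. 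Writing $u=\xi^2\in[0,2]$ gives $\mu^2=u(2-u)$, which attains its maximum value $1$ at $u=1$ (i.e. $|\xi|=1$) and vanishes at the endpoints $\xi=0$ and $|\xi|=\sqrt{2}$. Hence $0\leq\mu(\xi)\leq 1$ throughout the band, and this single fact drives all three estimates.

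For \eqref{Estimate_11} I would simply use that $\cosh$ is increasing on $[0,\infty)$ together with $\mu(\xi)t\leq t$ (valid since $\mu\leq 1$ and $t\geq 0$), which yields $\cosh(\mu(\xi)t)\leq\cosh t$. The remaining weight satisfies $|\xi|^s\leq 2^{s/2}$ on the band, a harmless $s$-dependent constant, and absorbing it into $\lesssim_s$ finishes the bound.

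For \eqref{Estimate_12} and \eqref{Estimate_13} the only delicate point is the ratio $\sinh(\mu t)/\mu$, which is formally a $0/0$ indeterminacy at the two degeneracy points $\xi=0,\sqrt{2}$ where $\mu=0$. The clean way to dispose of this is to invoke the monotonicity of $x\mapsto\sinh(x)/x$ on $[0,\infty)$, immediate from its even power series $1+x^2/6+\cdots$ (equivalently, its numerator derivative $x\sinh x\geq 0$). Since $\mu t\leq t$, this gives
\[
\frac{\sinh(\mu t)}{\mu}=t\,\frac{\sinh(\mu t)}{\mu t}\leq t\,\frac{\sinh t}{t}=\sinh t,
\]
an inequality that remains valid in the limit where $\mu=0$ (the left side tending to $t\leq\sinh t$). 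The leftover polynomial weights are uniformly bounded on the band: $|\xi|^s\xi^2=|\xi|^{s+2}\leq 2^{(s+2)/2}$ for \eqref{Estimate_12}, and $|\xi|^s(2-\xi^2)\leq 2^{s/2}\cdot 2$ for \eqref{Estimate_13}. Multiplying the bounded pieces and pulling the resulting supremum out of the $L^2(|\xi|\leq\sqrt{2})$ integral gives the claimed estimates with constants depending only on $s$.

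I would add one remark: the symbol in \eqref{Estimate_13} acts on $\hat\phi_0$ (as it does in \eqref{varphi_low}), so the natural right-hand side carries $\norm{\hat\phi_0}_{L^2}$; the argument above is completely insensitive to this relabeling of the datum. The step I expect to require the most care is precisely the treatment of $\sinh(\mu t)/\mu$ near the degeneracies $\xi=0,\sqrt{2}$ — not because it is technically hard, but because one must genuinely certify that this symbol is bounded on the \emph{closed} band rather than merely formally defined, and the monotonicity of $\sinh(x)/x$ is the most economical tool for that. Everything else is an elementary boundedness check on a compact interval.
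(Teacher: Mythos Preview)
Your proposal is correct and follows essentially the same route as the paper: pull the symbol out in $L^\infty(|\xi|\leq\sqrt{2})$, observe that $\mu(\xi)=|\xi|\sqrt{2-\xi^2}\in[0,1]$ on the band, and bound $\sinh(\mu t)/\mu$ by $\sinh t$ via the monotonicity of $x\mapsto\sinh(x)/x$ (this is exactly the content of the paper's inequality \eqref{ineq_1}, proved in Appendix~\ref{B}). Your remark that the right-hand side of \eqref{Estimate_13} should carry $\norm{\hat\phi_0}_{L^2}$ rather than $\norm{\hat\varphi_0}_{L^2}$ is also correct; this is a typo in the statement.
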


\begin{proof}
Let us prove \eqref{Estimate_11}. We have
\[
\begin{aligned}
& \int_{|\xi|\leq \sqrt{2}} |\xi|^{2s}\cosh^2(|\xi| \sqrt{2-\xi^2}~ t) |\hat\phi_0|^2(\xi)d\xi \\
& \qquad \qquad \leq ~ 2^s\sup_{|\xi|\leq \sqrt{2}} \cosh^2(|\xi| \sqrt{2-\xi^2}~ t) \norm{\hat\phi_0(\xi)}_{L^2(|\xi|\leq \sqrt{2})} \\
& \qquad \qquad \lesssim_s ~ \cosh^2 t \norm{\hat\phi_0(\xi)}_{L^2(|\xi|\leq \sqrt{2})}.
\end{aligned}
\]
Proof of \eqref{Estimate_12}. We have the standard inequality ($t\geq 0$)
\be\label{ineq_1}
\sup_{|\xi|\leq \sqrt{2} }\frac{\sinh(|\xi| \sqrt{2-\xi^2} ~t)}{|\xi|\sqrt{2-\xi^2} }\leq \sinh t.
\ee
For a proof of this result, see Appendix \ref{B}. From this estimate \eqref{Estimate_12} follows immediately.  The proof of \eqref{Estimate_13} is similar.
\end{proof}

Now we consider the inhomogeneous estimates.

\begin{lem}
 Consider again $s\geq 0$ and the symbols introduced in \eqref{phi_low}-\eqref{varphi_low}. Then we have
\be\label{Estimate_21}
\begin{aligned}
& \norm{|\xi|^s \int_0^t \cosh(|\xi| \sqrt{2-\xi^2} (t-\sigma))\hat g(\sigma,\xi) d\sigma }_{L^2(|\xi|\leq \sqrt{2})} \\
&\qquad\qquad  ~ \lesssim_s  \int_0^t    \cosh(t-\sigma) \norm{ \hat g(\sigma,\xi) }_{L^2(|\xi|\leq \sqrt{2})} d\sigma,
\end{aligned}
\ee
\be\label{Estimate_22}
\begin{aligned}
& \norm{|\xi|^s\int_0^t  \frac{ \sinh(|\xi| \sqrt{2-\xi^2} (t-\sigma))}{|\xi|\sqrt{2-\xi^2}} \xi^2 \hat f(\sigma,\xi)d\sigma}_{L^2(|\xi|\leq \sqrt{2})} \\
&\qquad\qquad  ~ \lesssim_s \int_0^t \sinh (t-\sigma)\norm{\hat f(\sigma,\xi)}_{L^2(|\xi|\leq \sqrt{2})}d\sigma,
\end{aligned}
\ee
and
\be\label{Estimate_23}
\begin{aligned}
& \norm{|\xi|^s \int_0^t  \frac{\sinh(|\xi|\sqrt{2-\xi^2} (t-\sigma)) }{|\xi| \sqrt{2-\xi^2}}(2-  \xi^2) \hat g(\sigma,\xi)d\sigma}_{L^2(|\xi|\leq \sqrt{2})}  \\
& \qquad\qquad  ~ \lesssim_s \int_0^t \sinh (t-\sigma)\norm{\hat g(\sigma,\xi)}_{L^2(|\xi|\leq \sqrt{2})}d\sigma.
\end{aligned}
\ee
\end{lem}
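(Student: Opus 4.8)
The plan is to reduce all three inhomogeneous (Duhamel) bounds to the pointwise symbol estimates already available from the homogeneous case, the only genuinely new ingredient being Minkowski's integral inequality, which lets us exchange the spatial $L^2$ norm with the time integration.

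First I would treat \eqref{Estimate_21}. Applying Minkowski's inequality for integrals to pull the $L^2(|\xi|\leq\sqrt2)$ norm inside the $d\sigma$ integral, the left-hand side is bounded by
\[
\int_0^t \norm{|\xi|^s \cosh(|\xi|\sqrt{2-\xi^2}\,(t-\sigma))\,\hat g(\sigma,\xi)}_{L^2(|\xi|\leq\sqrt2)}\,d\sigma.
\]
For each fixed $\sigma\in[0,t]$ the inner norm is of exactly the homogeneous type handled in the previous lemma: since $|\xi|\sqrt{2-\xi^2}\leq 1$ on $|\xi|\leq\sqrt2$ (the product $\sqrt{\xi^2(2-\xi^2)}$ attains its maximum $1$ at $\xi^2=1$), and $\cosh$ is increasing on $[0,\infty)$, we obtain the uniform bound $\cosh(|\xi|\sqrt{2-\xi^2}(t-\sigma))\leq\cosh(t-\sigma)$; combined with $|\xi|^s\leq 2^{s/2}$ on this compact frequency set, this yields \eqref{Estimate_21} after integrating in $\sigma$.

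Next, \eqref{Estimate_22} and \eqref{Estimate_23} follow the same two-step scheme. After Minkowski, I would bound the inner norm for fixed $\sigma$ using inequality \eqref{ineq_1} with $t$ replaced by $t-\sigma$, namely
\[
\sup_{|\xi|\leq\sqrt2}\frac{\sinh(|\xi|\sqrt{2-\xi^2}(t-\sigma))}{|\xi|\sqrt{2-\xi^2}}\leq\sinh(t-\sigma),
\]
together with the trivial bounds $|\xi|^s\,\xi^2\leq 2^{s/2+1}$ and $|\xi|^s(2-\xi^2)\leq 2^{s/2+1}$, both valid on $|\xi|\leq\sqrt2$. Integrating the resulting $\sinh(t-\sigma)$ factor against $\norm{\hat f(\sigma,\cdot)}_{L^2(|\xi|\leq\sqrt2)}$, respectively $\norm{\hat g(\sigma,\cdot)}_{L^2(|\xi|\leq\sqrt2)}$, gives precisely \eqref{Estimate_22} and \eqref{Estimate_23}.

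There is no serious obstacle here, as the estimates are a routine Duhamel upgrade of the homogeneous ones; the only points that require care are the legitimate application of Minkowski's integral inequality to move the $L^2_\xi$ norm through the time integral, and the observation that the pointwise symbol bounds of the preceding lemma (the $\cosh$ bound and \eqref{ineq_1}) apply \emph{verbatim} with $t\mapsto t-\sigma$, uniformly in the dummy variable $\sigma\in[0,t]$.
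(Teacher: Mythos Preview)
Your proposal is correct and follows essentially the same approach as the paper's own proof: apply Minkowski's integral inequality to bring the $L^2_\xi$ norm inside the time integral, then invoke the pointwise symbol bounds from the homogeneous lemma (the $\cosh$ bound and \eqref{ineq_1}) with $t$ replaced by $t-\sigma$. The paper is slightly more terse and does not spell out the explicit constants $2^{s/2}$, $2^{s/2+1}$, absorbing them into the $\lesssim_s$, but the logic is identical.
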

\begin{proof}
Proof of \eqref{Estimate_21}. We have
\[
\begin{aligned}
&  \norm{ |\xi|^s\int_0^t \cosh(|\xi| \sqrt{2-\xi^2} (t-\sigma))\hat g(\sigma,\xi) d\sigma }_{L^2(|\xi|\leq \sqrt{2})} \\
&~ \lesssim_s   \int_0^t    \norm{ \cosh(|\xi| \sqrt{2-\xi^2} (t-\sigma))\hat g(\sigma,\xi) }_{L^2(|\xi|\leq \sqrt{2})} d\sigma\\
&~ \lesssim_s  \int_0^t    \cosh(t-\sigma) \norm{ \hat g(\sigma,\xi) }_{L^2(|\xi|\leq \sqrt{2})} d\sigma.
\end{aligned}
\]
Proof of \eqref{Estimate_22}. Similar to the previous case, using \eqref{ineq_1} we have
\[
\begin{aligned}
& \norm{|\xi|^s \int_0^t  \frac{ \sinh(|\xi| \sqrt{2-\xi^2} (t-\sigma))}{|\xi|\sqrt{2-\xi^2}} \xi^2 \hat f(\sigma,\xi)d\sigma}_{L^2(|\xi|\leq \sqrt{2})} \\
&~\lesssim_s \int_0^t  \norm{  \frac{ \sinh(|\xi| \sqrt{2-\xi^2} (t-\sigma))}{|\xi|\sqrt{2-\xi^2}} \xi^2 \hat f(\sigma,\xi)}_{L^2(|\xi|\leq \sqrt{2})}d\sigma\\
&~\lesssim_s \int_0^t \sinh (t-\sigma)\norm{\hat f(\sigma,\xi)}_{L^2(|\xi|\leq \sqrt{2})}d\sigma.
\end{aligned}
\]
The proof of \eqref{Estimate_23} is completely similar. 
\end{proof}

From the previous estimates, the following results are immediate.
\begin{cor}\label{Fourier_low}
Assume $s\geq 0$, $t\geq 0$ and consider the Duhamel representation for $\phi$ and $\varphi$ given in \eqref{Duhamel}-\eqref{phi_low}-\eqref{varphi_low}. Then we have

\be\label{Hs_norm_phi_low}
\begin{aligned}
& \| |\xi|^s\mathcal F[ \Phi [f,g ; \phi_0,\varphi_0]] (t,\xi)\|_{L^2(|\xi|\leq \sqrt{2})} \\
 & \quad  \lesssim_s  \cosh t \norm{\hat\phi_0(\xi)}_{L^2(|\xi|\leq \sqrt{2})} + \sinh t  \norm{\hat\varphi_0(\xi)}_{L^2(|\xi|\leq \sqrt{2})} \\
& \qquad \qquad ~ + \int_0^t    \cosh(t-\sigma) \norm{ \hat g(\sigma,\xi) }_{L^2(|\xi|\leq \sqrt{2})} d\sigma ~ + \int_0^t \sinh (t-\sigma)\norm{\hat f(\sigma,\xi)}_{L^2(|\xi|\leq \sqrt{2})}d\sigma.
\end{aligned}
\ee
Similarly,

\be\label{Hs_norm_varphi_low}
\begin{aligned}
& \| |\xi|^s \mathcal F[ \Psi [f,g ; \phi_0,\varphi_0]] (t,\xi)\|_{L^2(|\xi|\leq \sqrt{2})} \\
 & \quad \lesssim_s ~ \cosh t  \norm{\hat\varphi_0(\xi)}_{L^2(|\xi|\leq \sqrt{2})}+  \sinh t \norm{\hat\varphi_0(\xi)}_{L^2(|\xi|\leq \sqrt{2})}  \\
& \qquad \qquad ~ +\int_0^t    \cosh(t-\sigma) \norm{ \hat f(\sigma,\xi) }_{L^2(|\xi|\leq \sqrt{2})} d\sigma   ~+\int_0^t \sinh (t-\sigma)\norm{\hat g(\sigma,\xi)}_{L^2(|\xi|\leq \sqrt{2})}d\sigma.
\end{aligned}
\ee

\end{cor}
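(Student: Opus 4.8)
The plan is to derive both \eqref{Hs_norm_phi_low} and \eqref{Hs_norm_varphi_low} as immediate consequences of the two preceding lemmas, via nothing more than the triangle inequality in $L^2(|\xi|\leq\sqrt2)$. First I would observe that the representation \eqref{phi_low} exhibits $\mathcal F[\Phi[f,g;\phi_0,\varphi_0]](t,\xi)$ as a sum of exactly four terms: a homogeneous $\cosh$-term carrying $\hat\phi_0$, a homogeneous $\sinh$-term carrying $\xi^2\hat\varphi_0$, an inhomogeneous $\cosh$-convolution built from $\hat g$, and an inhomogeneous $\sinh$-convolution built from $\xi^2\hat f$. Multiplying by $|\xi|^s$, taking the $L^2(|\xi|\leq\sqrt2)$ norm, and applying the triangle inequality splits the left-hand side of \eqref{Hs_norm_phi_low} into four pieces that have already been individually estimated: the first two by the homogeneous bounds \eqref{Estimate_11} and \eqref{Estimate_12}, the last two by the inhomogeneous bounds \eqref{Estimate_21} and \eqref{Estimate_22}. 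Summing these four estimates reproduces verbatim the right-hand side of \eqref{Hs_norm_phi_low}.

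For \eqref{Hs_norm_varphi_low} I would proceed identically, expanding $\mathcal F[\Psi[f,g;\phi_0,\varphi_0]]$ through \eqref{varphi_low} into its four analogous summands. The homogeneous $\sinh$-term, which carries the weight $(2-\xi^2)\hat\phi_0$, is controlled by \eqref{Estimate_13}, and the inhomogeneous $\sinh$-convolution carrying $(2-\xi^2)\hat g$ is controlled by \eqref{Estimate_23}. The only point deserving a word is that the two remaining pieces — the homogeneous $\cosh$-term in $\hat\varphi_0$ and the inhomogeneous $\cosh$-convolution in $\hat f$ — are not literally among the estimates stated above; however each follows by the \emph{identical} argument that produced \eqref{Estimate_11} and \eqref{Estimate_21} respectively, simply relabelling the function appearing inside the $L^2$ norm. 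I would state these as trivial variants rather than reprove them, since the proofs of \eqref{Estimate_11} and \eqref{Estimate_21} never used any special feature of $\hat\phi_0$ or $\hat g$ beyond their membership in $L^2(|\xi|\leq\sqrt2)$.

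There is no genuine obstacle here; the entire content is bookkeeping, namely matching each of the four summands in \eqref{phi_low} and in \eqref{varphi_low} to the correct homogeneous or inhomogeneous estimate. All the analytic work — in particular the uniform control \eqref{ineq_1} of the $\sinh$-multiplier and the harmlessness on $|\xi|\leq\sqrt2$ of the algebraic weights $\xi^2$, $2-\xi^2$ and $|\xi|^s$ (bounded there by $2$, $2$ and $2^{s/2}$) — has already been absorbed into the $s$-dependent implicit constants inside the two lemmas. Consequently no new inequality beyond the triangle inequality is required, which is precisely why the statement is labelled as following \emph{immediately} from the previous estimates.
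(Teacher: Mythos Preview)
Your proposal is correct and matches the paper's own treatment exactly: the paper states that the corollary follows \emph{immediately} from the preceding estimates, and your argument---triangle inequality on the four-term Duhamel decomposition \eqref{phi_low}--\eqref{varphi_low}, then invocation of \eqref{Estimate_11}--\eqref{Estimate_13} and \eqref{Estimate_21}--\eqref{Estimate_23}---is precisely that. Your remark that the two $\cosh$-pieces in the $\Psi$ estimate require only a trivial relabelling of \eqref{Estimate_11} and \eqref{Estimate_21} is also on point.
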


\medskip

\subsection{Estimates for high frequencies} Now we deal with the ``stable'' case $|\xi|>\sqrt{2}$. Now we have oscillatory integrals with decay estimates, but since the previous unstable regime ($|\xi|<\sqrt{2}$) destroyed any possibility of an $L^1-L^\infty$ decay, we will only need to prove energy estimates for sufficiently large frequencies.

\begin{lem} Assume $s,t\geq 0$. Consider the representations \eqref{phi_high} and \eqref{varphi_high} for $\Phi$ and $\Psi$, respectively. Then we have
\be\label{Estimate_14}
\norm{ |\xi|^s\cos(|\xi| \sqrt{\xi^2-2} ~t) \hat\phi_0(\xi)}_{L^2(|\xi|> \sqrt{2})} ~ \leq ~ \norm{|\xi|^s \hat\phi_0(\xi)}_{L^2(|\xi|> \sqrt{2})},
\ee
\be\label{Estimate_15}
\begin{aligned}
& \norm{ |\xi|^s\frac{ \sin(|\xi| \sqrt{\xi^2-2}~ t)}{|\xi|\sqrt{\xi^2-2}} \xi^2 \hat\varphi_0 (\xi) }_{L^2(|\xi|> \sqrt{2})} ~ \leq  \max\{1, |t|\}  \norm{|\xi|^s \hat\varphi_0(\xi)}_{L^2(|\xi|> \sqrt{2})},
\end{aligned}
\ee
and

\be\label{Estimate_16}
\begin{aligned}
& \norm{|\xi|^s \frac{\sin(|\xi|\sqrt{\xi^2-2} ~ t) }{|\xi| \sqrt{\xi^2-2}} (2- \xi^2) \hat\phi_0(\xi) }_{L^2(|\xi| > \sqrt{2})}  ~ \leq   \max\{1, |t|\} \norm{|\xi|^s \hat\phi_0(\xi)}_{L^2(|\xi| > \sqrt{2})} .
\end{aligned}
\ee
\end{lem}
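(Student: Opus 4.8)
All three bounds concern a single scalar Fourier multiplier, so the plan is to reduce each one, via Plancherel, to a pointwise estimate on that multiplier. Writing $\omega(\xi) := |\xi|\sqrt{\xi^2-2}$ for $|\xi|>\sqrt{2}$, each left-hand side has the form $\| |\xi|^s M(t,\xi)\hat h(\xi)\|_{L^2(|\xi|>\sqrt{2})}$, with $h\in\{\phi_0,\varphi_0\}$ and $M$ the corresponding symbol. Since $M$ does not interact with the weight $|\xi|^s$, it suffices to establish a uniform bound $\sup_{|\xi|>\sqrt{2}}|M(t,\xi)|\le(\text{RHS constant})$ and then pull the supremum out of the integral: $\||\xi|^s M\hat h\|_{L^2}\le\big(\sup_\xi|M|\big)\,\||\xi|^s\hat h\|_{L^2}$. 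In this way everything is reduced to three elementary sup bounds on $M$.

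Two of these are immediate. For \eqref{Estimate_14} the symbol is $M=\cos(\omega(\xi)t)$, and $|M|\le 1$ gives the claim at once. For \eqref{Estimate_16} the symbol is $\frac{\sin(\omega(\xi)t)}{\omega(\xi)}(2-\xi^2)$; since $|2-\xi^2|=\xi^2-2$ on this region, its modulus equals $|\sin(\omega(\xi)t)|\,\frac{\xi^2-2}{\omega(\xi)}=|\sin(\omega(\xi)t)|\,\frac{\sqrt{\xi^2-2}}{|\xi|}$, which is bounded by $1$ using merely $|\sin|\le 1$ together with $\sqrt{\xi^2-2}<|\xi|$. As $1\le\max\{1,|t|\}$, \eqref{Estimate_16} follows without even invoking the $|t|$ factor.

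The genuine work is \eqref{Estimate_15}, whose symbol has modulus $|\sin(\omega(\xi)t)|\,\frac{|\xi|}{\sqrt{\xi^2-2}}$ and degenerates as $|\xi|\downarrow\sqrt{2}$, where $\frac{|\xi|}{\sqrt{\xi^2-2}}\to\infty$. The plan is a two-regime argument built on the elementary inequality $|\sin u|\le\min\{1,|u|\}$. Away from the boundary, where $\frac{|\xi|}{\sqrt{\xi^2-2}}$ stays bounded, I would simply use $|\sin|\le 1$. Near the boundary I would instead use $|\sin(\omega(\xi)t)|\le \omega(\xi)|t|$, so that the dangerous factor cancels: $|\sin(\omega(\xi)t)|\frac{|\xi|}{\sqrt{\xi^2-2}}\le \omega(\xi)|t|\frac{|\xi|}{\sqrt{\xi^2-2}}=\xi^2|t|$, which remains controlled because $\xi^2$ is close to $2$ there. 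Combining the two regimes produces the bound by $\max\{1,|t|\}$.

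The main obstacle is precisely this degeneracy at $|\xi|=\sqrt{2}$: the denominator $\omega(\xi)=|\xi|\sqrt{\xi^2-2}$ vanishes, and the only way to keep the multiplier finite is to spend the vanishing of $\sin(\omega(\xi)t)$ at the same rate, which is exactly what forces the linear-in-$t$ growth recorded on the right-hand side of \eqref{Estimate_15}. This reflects the non-dispersive behaviour of the equation at the transition frequency $|\xi|=\sqrt{2}$, where $\omega(\xi)=0$, in contrast with the genuinely oscillatory regime $|\xi|\gg\sqrt{2}$. Once this boundary analysis is in place, I expect the remaining steps — checking the two elementary sup bounds and assembling them — to be entirely routine.
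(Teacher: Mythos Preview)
Your proposal is correct and matches the paper's approach: for \eqref{Estimate_15} both you and the paper split into a near-boundary region (using $|\sin u|\le |u|$, which produces the factor $|t|$) and a far region (using $|\sin u|\le 1$, where $|\xi|/\sqrt{\xi^2-2}$ is bounded), the paper taking the explicit threshold $|\xi|=2$. Your handling of \eqref{Estimate_16} is in fact slightly sharper than the paper's ``follows similar steps'': you observe that the symbol simplifies to $|\sin(\omega t)|\,\sqrt{\xi^2-2}/|\xi|\le 1$ uniformly, so no regime-splitting or $|t|$-factor is actually needed there.
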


\begin{proof}
The proof of \eqref{Estimate_14} is direct. Let us show \eqref{Estimate_15}. We have
 \[
 \begin{aligned}
  \norm{ |\xi|^s\frac{ \sin(|\xi| \sqrt{\xi^2-2}~ t)}{|\xi|\sqrt{\xi^2-2}} \xi^2 \hat\varphi_0(\xi) }_{L^2(|\xi|> \sqrt{2})}  \lesssim & ~ \norm{ |\xi|^s\frac{ \sin(|\xi| \sqrt{\xi^2-2}~ t)}{|\xi|\sqrt{\xi^2-2}} \xi^2 \hat\varphi_0(\xi) }_{L^2(\sqrt{2} < |\xi|< 2)} \\
 &    \quad ~ + \norm{ |\xi|^s\frac{ \sin(|\xi| \sqrt{\xi^2-2}~ t)}{|\xi|\sqrt{\xi^2-2}} \xi^2 \hat\varphi_0 (\xi)}_{L^2(|\xi|> 2)}\\
&   \lesssim ~ |t|^{} \norm{|\xi|^s \hat\varphi_0(\xi) }_{L^2(\sqrt{2} < |\xi|< 2)} +\norm{|\xi|^s \hat\varphi_0(\xi) }_{L^2( |\xi|> 2)}\\
& \lesssim  ~ \max\{1, |t|\}   \norm{|\xi|^s \hat\varphi_0(\xi) }_{L^2( |\xi|> \sqrt{2})}.
 \end{aligned}
 \]
 Finally, \eqref{Estimate_16} holds by following similar steps.
\end{proof}

Finally, we consider the inhomogeneous estimates.

\begin{lem}
Assume $s,t\geq 0$. Under the representation \eqref{phi_high} and \eqref{varphi_high}, we have
\be\label{Estimate_24}
\begin{aligned}
 \norm{|\xi|^s \int_0^t \cos(|\xi| \sqrt{\xi^2-2} (t-\sigma))\hat g(\sigma,\xi) d\sigma }_{L^2(|\xi|> \sqrt{2})}  \leq  \int_0^t  \norm{|\xi|^s \hat g(\sigma,\xi) }_{L^2(|\xi|> \sqrt{2})} d\sigma,
\end{aligned}
\ee
\be\label{Estimate_25}
\begin{aligned}
& \norm{|\xi|^s\int_0^t  \frac{ \sin(|\xi| \sqrt{\xi^2-2} (t-\sigma))}{|\xi|\sqrt{\xi^2-2}} \xi^2 \hat f(\sigma,\xi)d\sigma}_{L^2(|\xi|> \sqrt{2})} \\
&\qquad\quad ~ \lesssim  \int_0^t  \max\{ 1,|t-\sigma|\}\norm{|\xi|^s\hat f(\sigma,\xi)}_{L^2(|\xi| > \sqrt{2})}d\sigma,
\end{aligned}
\ee
and
\be\label{Estimate_26}
\begin{aligned}
& \norm{ |\xi|^s\int_0^t  \frac{\sin(|\xi|\sqrt{\xi^2-2} (t-\sigma)) }{|\xi| \sqrt{\xi^2-2}}(2-  \xi^2) \hat g(\sigma,\xi)d\sigma}_{L^2(|\xi|> \sqrt{2})}  \\
&\qquad\quad ~   \lesssim  \int_0^t   \max\{ 1,|t-\sigma|\} \norm{|\xi|^s\hat g(\sigma,\xi)}_{L^2(|\xi|> \sqrt{2})}d\sigma.
\end{aligned}
\ee
\end{lem}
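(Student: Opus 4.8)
The plan is to reduce all three estimates to the pointwise multiplier bounds already established for the homogeneous high-frequency estimates \eqref{Estimate_14}--\eqref{Estimate_16}, after first moving the spatial $L^2(|\xi|>\sqrt2)$ norm inside the time integral. Concretely, for each of \eqref{Estimate_24}, \eqref{Estimate_25} and \eqref{Estimate_26} I would begin by applying Minkowski's integral inequality, which gives
\[
\norm{|\xi|^s \int_0^t m(t-\sigma,\xi)\,\hat h(\sigma,\xi)\,d\sigma}_{L^2(|\xi|>\sqrt2)} \le \int_0^t \norm{|\xi|^s m(t-\sigma,\xi)\,\hat h(\sigma,\xi)}_{L^2(|\xi|>\sqrt2)}\,d\sigma,
\]
where $m$ denotes the relevant Fourier multiplier arising in \eqref{phi_high}--\eqref{varphi_high} and $h\in\{f,g\}$. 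It then remains to bound, for each fixed $\sigma$, the symbol $m(t-\sigma,\xi)$ uniformly in $\xi$ on $|\xi|>\sqrt2$; this is precisely the content of the homogeneous bounds with $t$ replaced by the shifted time $\tau:=t-\sigma$.

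For \eqref{Estimate_24} this is immediate: the multiplier is $\cos(|\xi|\sqrt{\xi^2-2}\,\tau)$, which satisfies $|\cos(\cdot)|\le1$ pointwise, so the inner norm is at most $\norm{|\xi|^s\hat g(\sigma,\xi)}_{L^2(|\xi|>\sqrt2)}$ and the claimed bound follows with constant $1$. For \eqref{Estimate_25} and \eqref{Estimate_26} I would reproduce the frequency splitting used in the proof of \eqref{Estimate_15}, decomposing $|\xi|>\sqrt2$ into the bounded annulus $\sqrt2<|\xi|<2$ and the outer region $|\xi|>2$. On the outer region the multiplier $\frac{\sin(|\xi|\sqrt{\xi^2-2}\,\tau)}{|\xi|\sqrt{\xi^2-2}}\xi^2$ is controlled in absolute value by $\frac{\xi^2}{|\xi|\sqrt{\xi^2-2}}=\frac{|\xi|}{\sqrt{\xi^2-2}}\le\sqrt2$ (using $\xi^2-2>\tfrac12\xi^2$ for $|\xi|>2$ together with $|\sin|\le1$), hence is $O(1)$; analogously $\frac{\sin(\cdot)}{|\xi|\sqrt{\xi^2-2}}(2-\xi^2)$ is bounded there by $\frac{\xi^2-2}{|\xi|\sqrt{\xi^2-2}}=\frac{\sqrt{\xi^2-2}}{|\xi|}\le1$.

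The only delicate point, and the place where the factor $\max\{1,|t-\sigma|\}$ enters, is the annulus $\sqrt2<|\xi|<2$, where $\sqrt{\xi^2-2}\to0$ and the symbol $\frac{1}{|\xi|\sqrt{\xi^2-2}}$ is singular at the endpoint $|\xi|=\sqrt2$. Here I would invoke the elementary inequality $|\sin(a\tau)|\le a|\tau|$, valid for $a\ge0$, so that $\bigl|\frac{\sin(|\xi|\sqrt{\xi^2-2}\,\tau)}{|\xi|\sqrt{\xi^2-2}}\bigr|\le|\tau|$ uniformly across the singularity; since $\xi^2\le4$ (resp.\ $|2-\xi^2|\le2$) on the annulus, the full multiplier is bounded by a constant times $|\tau|=|t-\sigma|$. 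Combining the two regions yields the pointwise factor $\max\{1,|t-\sigma|\}$, and integrating in $\sigma$ produces \eqref{Estimate_25} and \eqref{Estimate_26}. I expect no genuine obstacle here: the cancellation of the apparent singularity at $|\xi|=\sqrt2$ through $|\sin(a\tau)|\le a|\tau|$ is the only step requiring care, and it is exactly the high-frequency analogue of the inequality \eqref{ineq_1} used to close the low-frequency regime.
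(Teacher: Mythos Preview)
Your proposal is correct and follows essentially the same approach as the paper: the paper's proof simply states that \eqref{Estimate_24} is direct and that \eqref{Estimate_25}--\eqref{Estimate_26} follow from the steps in the proof of \eqref{Estimate_15}, which is precisely the Minkowski-plus-frequency-splitting argument you spell out. In fact your write-up is more detailed than the paper's, which leaves the Minkowski step and the explicit multiplier bounds on $\{\sqrt2<|\xi|<2\}$ and $\{|\xi|>2\}$ implicit.
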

\begin{proof}
Estimate \eqref{Estimate_24} is direct. Estimates \eqref{Estimate_25} and \eqref{Estimate_26} hold following the steps in the proof of estimate \eqref{Estimate_15}.
\end{proof}

Now we can state a corresponding result (as in Corollary \ref{Fourier_low}), for the case of large frequencies. This result will be useful in next section.

\begin{cor}\label{Fourier_high}
Assume $s,t\geq 0$ and consider the Duhamel representation for $\phi$ and $\varphi$ given in \eqref{Duhamel}-\eqref{phi_high}-\eqref{varphi_high}. Then we have

\be\label{Hs_norm_phi_high}
\begin{aligned}
& \| |\xi|^s\mathcal F[ \Phi [f,g ; \phi_0,\varphi_0]] (t,\xi)\|_{L^2(|\xi|> \sqrt{2})} \\
 & \quad  \lesssim_s   \norm{|\xi|^s \hat\phi_0(\xi)}_{L^2(|\xi|> \sqrt{2})}  ~ + \max\{1, |t|\} \norm{|\xi|^s \hat\varphi_0(\xi)}_{L^2(|\xi|> \sqrt{2})} \\
& \qquad \qquad ~ + \int_0^t   \norm{ |\xi|^s\hat g(\sigma,\xi) }_{L^2(|\xi|> \sqrt{2})} d\sigma \\
& \qquad\qquad  ~ + \int_0^t  \max\{1, |t-\sigma|\} \norm{|\xi|^s \hat f(\sigma,\xi)}_{L^2(|\xi| > \sqrt{2})}d\sigma.
\end{aligned}
\ee
Similarly,

\be\label{Hs_norm_varphi_high}
\begin{aligned}
& \| |\xi|^s \mathcal F[ \Psi [f,g ; \phi_0,\varphi_0]] (t,\xi)\|_{L^2(|\xi| > \sqrt{2})} \\
 & \quad \lesssim_s ~  \norm{|\xi|^s\hat\varphi_0(\xi)}_{L^2(|\xi| > \sqrt{2})} + \max\{1, |t|\} \norm{|\xi|^s\hat\varphi_0(\xi)}_{L^2(|\xi|> \sqrt{2})} \\
& \qquad \qquad ~ +\int_0^t    \norm{|\xi|^s \hat f(\sigma,\xi) }_{L^2(|\xi|> \sqrt{2})} d\sigma \\
& \qquad \qquad ~ +\int_0^t  \max\{1, |t-\sigma|\}  \norm{|\xi|^s\hat g(\sigma,\xi)}_{L^2(|\xi| > \sqrt{2})}d\sigma.
\end{aligned}
\ee

\end{cor}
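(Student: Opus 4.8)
The plan is to derive both inequalities \eqref{Hs_norm_phi_high} and \eqref{Hs_norm_varphi_high} directly from the closed-form representations \eqref{phi_high}--\eqref{varphi_high}, combined with the term-by-term symbol bounds already established in the two preceding lemmas. The key structural observation is that, on the region $|\xi|>\sqrt 2$, each of $\mathcal F[\Phi]$ and $\mathcal F[\Psi]$ is written as a sum of exactly four pieces: two ``homogeneous'' contributions depending on the data $\hat\phi_0,\hat\varphi_0$, and two ``inhomogeneous'' Duhamel integrals depending on $\hat f,\hat g$. Hence the whole argument reduces to applying the $L^2(|\xi|>\sqrt 2)$ triangle inequality, against the weight $|\xi|^s$, and then estimating each of the four pieces separately.

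Concretely, for \eqref{Hs_norm_phi_high} I would multiply \eqref{phi_high} by $|\xi|^s$, take the $L^2(|\xi|>\sqrt 2)$ norm, and split it into four terms by the triangle inequality. The first term, carrying $\cos(|\xi|\sqrt{\xi^2-2}\,t)\hat\phi_0$, is bounded by \eqref{Estimate_14}; the second, carrying $\tfrac{\sin(\cdots)}{|\xi|\sqrt{\xi^2-2}}\xi^2\hat\varphi_0$, by \eqref{Estimate_15}; the third (the $\hat g$ integral) by the inhomogeneous bound \eqref{Estimate_24}; and the last (the $\hat f$ integral) by \eqref{Estimate_25}. Summing these four contributions reproduces exactly the right-hand side of \eqref{Hs_norm_phi_high}, with the growth factors $\max\{1,|t|\}$ and $\max\{1,|t-\sigma|\}$ attached respectively to the $\hat\varphi_0$ homogeneous term and to the $\hat f$ Duhamel integral.

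For \eqref{Hs_norm_varphi_high} the procedure is identical, now applied to \eqref{varphi_high}. The pure cosine term in $\hat\varphi_0$ is controlled by the obvious analogue of \eqref{Estimate_14} (with $\hat\varphi_0$ in place of $\hat\phi_0$, the proof being verbatim the same); the term carrying $\tfrac{\sin(\cdots)}{|\xi|\sqrt{\xi^2-2}}(2-\xi^2)\hat\phi_0$ is bounded by \eqref{Estimate_16}; the $\hat f$ integral by \eqref{Estimate_24}; and the $\hat g$ integral by the inhomogeneous estimate \eqref{Estimate_26}. Adding the four bounds yields \eqref{Hs_norm_varphi_high}.

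Since every individual symbol estimate has already been carried out in the earlier lemmas, I do not expect any genuine analytic obstacle here: the corollary is essentially a bookkeeping assembly. The only point requiring minimal care is to pair each of the four terms with the correct lemma — in particular, noticing that the two ``pure'' cosine terms require the growth-free bound \eqref{Estimate_14} (and its $\hat\varphi_0$ analogue), whereas the $\sin$-type terms and their Duhamel counterparts pick up the factors $\max\{1,|t|\}$ and $\max\{1,|t-\sigma|\}$ coming from \eqref{Estimate_15}, \eqref{Estimate_16}, \eqref{Estimate_25}, \eqref{Estimate_26}.
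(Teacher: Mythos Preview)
Your proposal is correct and matches the paper's approach exactly: the paper presents this corollary as an immediate consequence of the preceding term-by-term lemmas (\eqref{Estimate_14}--\eqref{Estimate_16} and \eqref{Estimate_24}--\eqref{Estimate_26}), assembled via the triangle inequality applied to the four-term representations \eqref{phi_high}--\eqref{varphi_high}. Your pairing of each term with its corresponding estimate is accurate, and your remark about needing the trivial $\hat\varphi_0$ and $\hat f$ analogues of \eqref{Estimate_14} and \eqref{Estimate_24} is well taken.
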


\begin{rem}\label{Contraction_estimates}
Note finally that the following continuity estimates are direct from the previous estimates: for low frequencies,
\be\label{Contraction_phi_low}
\begin{aligned}
& \| |\xi|^s\mathcal F[ \Phi [f_1,g_1 ; \phi_0,\varphi_0]  -\Phi [f_2,g_2 ; \phi_0,\varphi_0]  ] (t,\xi)\|_{L^2(|\xi|\leq \sqrt{2})} \\
 & \quad  \lesssim_s  \int_0^t    \cosh(t-\sigma) \norm{ (\hat g_1 -\hat g_2)(\sigma,\xi) }_{L^2(|\xi|\leq \sqrt{2})} d\sigma \\
& \qquad\qquad  ~ + \int_0^t \sinh (t-\sigma)\norm{(\hat f_1 -\hat f_2)(\sigma,\xi)}_{L^2(|\xi|\leq \sqrt{2})}d\sigma,
\end{aligned}
\ee
while for large frequencies,
\be\label{Contraction_phi_high}
\begin{aligned}
& \| |\xi|^s\mathcal F[ \Phi [f_1,g_1 ; \phi_0,\varphi_0]  - \Phi [f_2,g_2 ; \phi_0,\varphi_0] ]  (t,\xi)\|_{L^2(|\xi|> \sqrt{2})} \\
& \qquad \quad ~ \lesssim \int_0^t   \norm{ |\xi|^s(\hat g_1-\hat g_2)(\sigma,\xi) }_{L^2(|\xi|> \sqrt{2})} d\sigma \\
& \qquad\qquad  ~ + \int_0^t  \max\{1, |t-\sigma |\} \norm{|\xi|^s (\hat f_1 -\hat f_2)(\sigma,\xi)}_{L^2(|\xi| > \sqrt{2})}d\sigma.
\end{aligned}
\ee
and a similar estimate is satisfied by $\Psi$ interchanging the role of $f$ and $g$.
\end{rem}
\medskip

\section{Proof of Theorem \ref{MT}}\label{3}

\medskip

\subsection{Statement of the result, and first estimates} We start with the definition of solution for \eqref{mNLS}.

\begin{defn}[Solution in the Duhamel sense]
Fix $T>0$, and $s\geq 0$. We say that $w\in C([0,T],H^s(\R))$ is a strong solution to \eqref{mNLS} with initial datum $w_0\in H^s(\R)$ if $w$ satisfies the equation
\be\label{Duhamel_2}
\begin{aligned}
i\partial_t  w + \partial_x^2  w +2\re w =&~ G, \\
G =&~ G[w]:= -[2|w|^2 + w^2 + |w|^2w],
\end{aligned}
\ee
in the integral sense. More precisely, under the decomposition \eqref{deco_u0}-\eqref{deco_F},  $w=\phi + i\varphi$ satisfies the Duhamel representation \eqref{Duhamel}, with $f+ig=G$.
\end{defn}

In order to show the existence of a unique solution of the form \eqref{Duhamel_2} to \eqref{mNLS}, we will use a standard contraction principle in the Sobolev space $H^s$. We will show the following detailed statement:

\begin{thm}\label{MT1}
For any $T>0$, there exists $\delta =\delta (T)>0$ such that for any $w_0 \in H^s(\R)$, $s>\frac12$ and $\|w_0\|_{H^s} <\delta$,  there exists a unique solution to \eqref{mNLS} $w\in   C([0,T]; H^s(\R))$.  A corresponding result holds for any $w_0 \in H^s(\R)$ with no size restriction, provided $T>0$ is chosen small enough. Finally, we have the following alternative: if $I$ is the maximal interval of existence of $w$, and $\sup I<+\infty$, then $\lim_{t\uparrow \sup I } \|w(t)\|_{H^s} =+\infty.$ 
\end{thm}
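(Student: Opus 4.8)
The plan is to run a standard contraction-mapping (Picard) argument on the complete metric space $X_T := C([0,T];H^s(\R))$ with norm $\norm{w}_{X_T} := \sup_{t\in[0,T]}\norm{w(t)}_{H^s}$. Writing $w=\phi+i\varphi$ and $w_0=\phi_0+i\varphi_0$ as in \eqref{deco_u0}, I define the solution operator
\[
\Gamma[w] := \Phi[f,g;\phi_0,\varphi_0] + i\,\Psi[f,g;\phi_0,\varphi_0], \qquad f+ig = G[w] = -\big[2|w|^2 + w^2 + |w|^2w\big],
\]
where $\Phi,\Psi$ are the Duhamel operators of Lemma \ref{Fourier_Linear} and $G[w]$ is the nonlinearity from \eqref{Duhamel_2}. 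By construction a fixed point of $\Gamma$ in $X_T$ is exactly a solution in the Duhamel sense. The two structural ingredients are the frequency-localized linear bounds of Corollaries \ref{Fourier_low} and \ref{Fourier_high}, and the fact that for $s>\tfrac12$ one has $H^s(\R)\hookrightarrow L^\infty(\R)$ and $H^s(\R)$ is a Banach algebra, so that $\norm{uv}_{H^s}\lesssim \norm{u}_{H^s}\norm{v}_{H^s}$; this is where the hypothesis $s>\tfrac12$ is used in an essential way.

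For the a priori bound I would reconstruct the full $H^s$ norm by splitting frequencies at $|\xi|=\sqrt2$: since $\langle\xi\rangle^s\approx 1$ on $|\xi|\le\sqrt2$ and $\langle\xi\rangle^s\approx|\xi|^s$ on $|\xi|>\sqrt2$, one has $\norm{w}_{H^s}^2\approx \norm{\hat w}_{L^2(|\xi|\le\sqrt2)}^2+\norm{|\xi|^s\hat w}_{L^2(|\xi|>\sqrt2)}^2$. Applying Corollary \ref{Fourier_low} with exponent $0$ on the unstable region (estimates \eqref{Hs_norm_phi_low}--\eqref{Hs_norm_varphi_low}) and Corollary \ref{Fourier_high} with exponent $s$ on the stable region (estimates \eqref{Hs_norm_phi_high}--\eqref{Hs_norm_varphi_high}), and using $\cosh(t-\sigma)\le\cosh T$, $\max\{1,|t-\sigma|\}\le\max\{1,T\}$ on $[0,T]$, together with $\int_0^t\cosh(t-\sigma)\,d\sigma=\sinh t\le\sinh T$, I obtain a bound of the schematic form
\[
\norm{\Gamma[w]}_{X_T}\;\le\; C(T)\,\norm{w_0}_{H^s}\;+\;\tau(T)\,\norm{G[w]}_{X_T},
\]
with $C(T)\sim\cosh T$ collecting the finite-but-exponentially-growing homogeneous factors and $\tau(T):=\sinh T+\max\{1,T\}\,T\to 0$ as $T\downarrow 0$. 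The algebra property then yields $\norm{G[w]}_{X_T}\lesssim\norm{w}_{X_T}^2+\norm{w}_{X_T}^3$.

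For the contraction I would use the difference estimates of Remark \ref{Contraction_estimates}, in which the common initial data cancel so that only the inhomogeneous (time-integrated) terms survive, giving $\norm{\Gamma[w_1]-\Gamma[w_2]}_{X_T}\lesssim\tau(T)\,\norm{G[w_1]-G[w_2]}_{X_T}$; combined with the pointwise Lipschitz bound $\norm{G[w_1]-G[w_2]}_{H^s}\lesssim(\norm{w_1}_{H^s}+\norm{w_2}_{H^s}+\norm{w_1}_{H^s}^2+\norm{w_2}_{H^s}^2)\norm{w_1-w_2}_{H^s}$, again from the algebra estimate. Restricting $\Gamma$ to the ball $B_R:=\{w\in X_T:\norm{w}_{X_T}\le R\}$ with $R:=2C(T)\norm{w_0}_{H^s}$, both self-mapping and the strict contraction property reduce to making $\tau(T)\,(R+R^2)$ small. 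This can be arranged in the two advertised ways: for \emph{fixed} $T$ it suffices to take $\norm{w_0}_{H^s}<\delta(T)$ so that $R$ is small; for \emph{arbitrary} data one keeps $T\le 1$ so that $C(T)$ is bounded, and shrinks $T$, using $\tau(T)\to0$ to beat the now-fixed value of $R$. Banach's fixed point theorem then produces a unique fixed point in $B_R$, and uniqueness in all of $X_T$ follows by the usual connectedness/bootstrap argument (the set where two $X_T$-solutions coincide is open and closed).

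Finally, the blow-up alternative is the standard continuation scheme: because the local existence time above depends only (and monotonically) on $T$ and on an upper bound for $\norm{w_0}_{H^s}$, if $\sup I=T^*<\infty$ while $\norm{w(t)}_{H^s}$ failed to tend to $+\infty$, there would be $t_n\uparrow T^*$ with $\norm{w(t_n)}_{H^s}\le M$; restarting the construction at $t_n$ with an existence time depending only on $M$ would extend the solution past $T^*$, contradicting maximality, whence $\lim_{t\uparrow T^*}\norm{w(t)}_{H^s}=+\infty$. The main obstacle, and the conceptual point, is that the linear propagator is \emph{not} uniformly bounded on $H^s$: the low-frequency factor $\cosh(|\xi|\sqrt{2-\xi^2}\,t)$ forces $C(T)\sim\cosh T$, so there is no time-global smallness and the iteration must be confined to finite intervals. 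The saving grace is that the unstable frequencies are confined to the compact set $|\xi|\le\sqrt2$, so $C(T)$ and $\tau(T)$ are finite for every finite $T$ --- precisely the compactly-supported backward-heat heuristic of \eqref{Expo} --- and this, together with the $s>\tfrac12$ algebra property, is exactly enough to close the fixed point.
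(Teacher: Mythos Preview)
Your proposal is correct and follows essentially the same route as the paper: the same Duhamel operator $\Gamma$, the same frequency splitting at $|\xi|=\sqrt2$ feeding into Corollaries~\ref{Fourier_low}--\ref{Fourier_high} and Remark~\ref{Contraction_estimates}, the same $H^s$-algebra estimate on $G[w]$, and the same two-regime closing (small $\delta$ for fixed $T$, small $T$ for arbitrary data). The only minor difference is in the blow-up alternative, where the paper argues via a Gronwall bound on \eqref{Auxiliar_0} assuming $\limsup_{t\uparrow\sup I}\|w(t)\|_{H^s}<\infty$, whereas your restart-at-$t_n$ continuation argument is the standard one and in fact more directly yields the full $\lim$ (rather than $\limsup$) statement claimed.
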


\begin{rem}\label{4.10.3}
It will be clear from the proof that Theorem \ref{MT1} holds for \eqref{NLS} with any $p>1$, provided the regularity space $H^s$, $s>\frac12$ is chosen appropriately, depending on the cases $p$ even, $p$ odd, or $p$ general positive non-integer real number. In each of these three cases, the nonlinearity $|u|^{p-1}u$ has different smoothness, which imply different (and limit) choices for the regularity space $H^s$ where the iteration procedure is performed. See \cite[Remark 4.10.3]{Cazenave} for a detailed account of this fact.
\end{rem}

For the proof of Theorem \ref{MT1} we will need the following preliminary results.

\begin{lem}
Consider $G=G[w]$ as in \eqref{Duhamel_2}. Assume $w\in H^s$, $s>\frac12$. Then $G$ is also in $H^s$, and 
\be\label{estimate_F}
\| G\|_{H^s} \lesssim_s   \|w\|_{H^s}^2 +  \|w\|_{H^s}^3,
\ee
\end{lem}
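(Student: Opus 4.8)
The plan is to reduce the whole estimate to the single fact that $H^s(\R)$ is a Banach algebra for $s>\frac12$, this being the sharp Sobolev threshold in dimension one. First I would record the bilinear estimate
\be
\norm{fg}_{H^s} \lesssim_s \norm{f}_{H^s}\norm{g}_{H^s}, \qquad f,g\in H^s(\R),~ s>\tfrac12 .
\ee
This can be invoked directly from \cite{Cazenave}, or proved elementarily from the embedding $H^s\hookrightarrow L^\infty$ (which holds precisely because $s>\frac12$) together with the Kato--Ponce fractional Leibniz rule; alternatively one argues on the Fourier side using $\langle\xi\rangle^s\lesssim_s \langle\xi-\eta\rangle^s+\langle\eta\rangle^s$ and Young's inequality for the convolution $\widehat{fg}=\hat f * \hat g$.

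The second ingredient is that complex conjugation is an isometry of $H^s$: since $\widehat{\bar w}(\xi)=\overline{\hat w(-\xi)}$ and $\langle\xi\rangle=\langle-\xi\rangle$, one has $\norm{\bar w}_{H^s}=\norm{w}_{H^s}$. Hence every factor $\bar w$ occurring in $G$ is controlled exactly as $w$ itself, and there is no need to treat $|w|^2$, $|w|^2w$ as anything other than products of the $H^s$ functions $w$ and $\bar w$.

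With these two facts in hand, I would write $G[w]=-\big[\,2\,w\bar w + w^2 + w^2\bar w\,\big]$ and estimate term by term. For the quadratic contributions the algebra property gives $\norm{w\bar w}_{H^s}\lesssim_s \norm{w}_{H^s}\norm{\bar w}_{H^s}=\norm{w}_{H^s}^2$ and likewise $\norm{w^2}_{H^s}\lesssim_s\norm{w}_{H^s}^2$. For the cubic term I would iterate the bilinear estimate, $\norm{w^2\bar w}_{H^s}\lesssim_s\norm{w^2}_{H^s}\norm{\bar w}_{H^s}\lesssim_s\norm{w}_{H^s}^3$. Summing with the triangle inequality then yields \eqref{estimate_F}, with the implicit constant depending only on $s$.

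There is no serious obstacle here; the estimate is routine once the algebra property is available. The only points requiring mild care are that the algebra constant is finite exactly in the range $s>\frac12$ and degenerates as $s\downarrow\frac12$ (so the hypothesis $s>\frac12$ is genuinely used), and that all implied constants depend only on $s$ and not on $w$. In particular the same scheme would adapt to general $p>1$ as in Remark \ref{4.10.3}, where the relevant product/composition estimate for $|u|^{p-1}u$ replaces the elementary polynomial bounds used here.
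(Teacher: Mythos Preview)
Your proof is correct and follows essentially the same approach as the paper: the paper's own proof is a one-line appeal to the fractional Leibniz rule and the fact that $H^s$, $s>\tfrac12$, is a multiplicative algebra, which is exactly what you use (with the additional explicit remark that conjugation is an $H^s$-isometry). Your version simply spells out the term-by-term estimate that the paper leaves implicit.
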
 

\begin{proof}
Direct from the Leibnitz rule of fractional derivatives \cite[Thm. 3.5]{LP} and the fact that $H^s$, $s>\frac12$ is a multiplicative algebra.
\end{proof}

\begin{lem}
Consider $w_1,w_2\in H^s$, $s>\frac12$, and $G_1=G[w_1]$, $G_2=G[w_2]$ as in \eqref{Duhamel_2}. Then we have 
\be\label{estimate_FmG}
\| G_1-G_2 \|_{H^s} \lesssim_s   (\|w_1\|_{H^s} +\|w_2\|_{H^s} +\|w_1\|_{H^s}^2 +\|w_2\|_{H^s}^2 )  \|w_1-w_2\|_{H^s}.
\ee
\end{lem}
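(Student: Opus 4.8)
The plan is to exploit two facts recalled just above in the excerpt: that $H^s(\R)$ with $s>\frac12$ is a multiplicative algebra, so that $\|uv\|_{H^s}\lesssim_s \|u\|_{H^s}\|v\|_{H^s}$, and that this norm is invariant under complex conjugation, $\|\bar u\|_{H^s}=\|u\|_{H^s}$. Writing the nonlinearity as $G[w]=-[2w\bar w + w^2 + w^2\bar w]$, it is a polynomial in $w$ and $\bar w$ consisting only of quadratic and cubic monomials. Hence $G_1-G_2$ decomposes into differences of such monomials, each of which I would treat by the elementary telescoping identity $a_1b_1 - a_2b_2 = a_1(b_1-b_2)+(a_1-a_2)b_2$.

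First I would handle the quadratic part. For the term $w\bar w$ one writes $w_1\bar w_1 - w_2\bar w_2 = w_1(\bar w_1-\bar w_2) + (w_1-w_2)\bar w_2$, and for $w^2$ simply $w_1^2-w_2^2=(w_1-w_2)(w_1+w_2)$. Applying the algebra property together with conjugation invariance, both contributions are bounded by $(\|w_1\|_{H^s}+\|w_2\|_{H^s})\|w_1-w_2\|_{H^s}$.

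Next, for the cubic term $w^2\bar w$ I would iterate the same identity:
\[ w_1^2\bar w_1 - w_2^2\bar w_2 = w_1^2(\bar w_1-\bar w_2) + (w_1-w_2)(w_1+w_2)\bar w_2. \]
The algebra and conjugation properties then give a bound by $\|w_1\|_{H^s}^2\|w_1-w_2\|_{H^s} + (\|w_1\|_{H^s}+\|w_2\|_{H^s})\|w_2\|_{H^s}\|w_1-w_2\|_{H^s}$, and absorbing the cross term via $\|w_1\|_{H^s}\|w_2\|_{H^s}\le \tfrac12(\|w_1\|_{H^s}^2+\|w_2\|_{H^s}^2)$ yields the factor $(\|w_1\|_{H^s}^2+\|w_2\|_{H^s}^2)$.

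Summing the quadratic and cubic contributions produces exactly \eqref{estimate_FmG}. There is no genuine obstacle here: the estimate follows from the algebra structure by the same mechanism as the a priori bound \eqref{estimate_F}, and the only points requiring care are the consistent bookkeeping of the conjugates (handled by conjugation invariance) and the telescoping of the cubic monomial, arranged so that every resulting cross term is absorbed into the symmetric combination $\|w_1\|_{H^s}^2+\|w_2\|_{H^s}^2$.
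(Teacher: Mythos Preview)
Your proof is correct and follows exactly the approach the paper has in mind: the paper's own proof is simply ``Direct from the definition of $G[w]$ in \eqref{Duhamel_2}'', relying implicitly on the same algebra property of $H^s$ and telescoping of the quadratic and cubic monomials that you have written out in detail. You have merely made explicit what the paper leaves to the reader.
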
 

\begin{proof}
Direct from the definition of $G[w]$ in \eqref{Duhamel_2}.
\end{proof}

Now, the remaining part of the proof is standard, see e.g. \cite[Section 4.10]{Cazenave}. Let us consider some parameters $M,T,\delta>0$, and $s>\frac12$, and assume
\be\label{IC_delta}
 \|w_0\|_{H^s} <\delta.
\ee
Note that $\delta$ is not necessarily small. Assume now $M\geq \delta.$ In what follows, we define the Banach space
\be\label{Ball}
\mathcal B(T,M):= \Big\{ w\in C([0,T];H^s) ~ : ~   \sup_{t\in [0,T]} \| w(t)\|_{H^s} \leq M \Big\},
\ee
endowed with the standard $L^\infty_t H^s_x$ norm.

\begin{lem}
Let $w_0$ be as in \eqref{IC_delta}, $w\in \mathcal B(T,M)$, and $G$ as in \eqref{Duhamel_2}. Assume that $w$ and $G$ satisfy the decomposition \eqref{deco_u0}-\eqref{deco_F}, and consider the operators $\Phi$ and $\Psi$ as in Lemma \ref{Fourier_Linear}. Then both $\Phi$ and $\Psi$ are well-defined and one has the estimates
\be\label{Estimate_Phi}
\begin{aligned}
 \|\Phi [f,g ; \phi_0,\varphi_0] (t)\|_{H^s}   \lesssim_s &~  \delta (\cosh T  +   \sinh T) \\
&  ~ +(M^2+M^3) (\sinh T + \cosh T-1)\\
& ~ + \max\{1, T \}  \delta + T (M^2+M^3) ~ + (M^2+M^3)  \max\{1, T\} ~T .
\end{aligned}
\ee
and
\be\label{Estimate_Psi}
\begin{aligned}
 \|\Psi [f,g ; \phi_0,\varphi_0] (t)\|_{H^s}   \lesssim_s &~  \delta (\cosh T  +   \sinh T) \\
&  ~ +(M^2+M^3) (\sinh T + \cosh T-1)\\
& ~ + \max\{1, T \}  \delta  + T (M^2+M^3)  ~ + (M^2+M^3)  \max\{1, T\} ~T .
\end{aligned}
\ee
\end{lem}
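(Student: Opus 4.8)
The plan is to estimate $\|\Phi(t)\|_{H^s}$ (and, by the same computation, $\|\Psi(t)\|_{H^s}$) by splitting the frequency axis into the unstable block $|\xi|\le\sqrt2$ and the dispersive block $|\xi|>\sqrt2$, and then feeding the linear bounds already recorded in Corollaries \ref{Fourier_low} and \ref{Fourier_high} with the size information coming from the datum and from the nonlinearity. Since $\|\Phi\|_{H^s}$ is controlled by the sum of its restrictions to the two blocks, I would treat the pieces separately. On the low block, because $|\xi|\le\sqrt2$, the weight $(1+|\xi|^2)^{s/2}$ is bounded above and below by $s$-dependent constants, so the $H^s$-restricted norm is comparable to the $L^2$-restricted norm; the latter is exactly what the $s=0$ instance of \eqref{Hs_norm_phi_low} bounds. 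On the high block I would apply \eqref{Hs_norm_phi_high} verbatim. Adding the two contributions, up to a constant $\lesssim_s$, yields a bound for $\|\Phi(t)\|_{H^s}$ in terms of the data norms of $\phi_0,\varphi_0$ and the time integrals of the data norms of $f,g$.

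The second step is to insert the available sizes. For the datum, the quantities $\norm{\hat\phi_0}_{L^2(|\xi|\le\sqrt2)}$, $\norm{\hat\varphi_0}_{L^2(|\xi|\le\sqrt2)}$ and $\norm{|\xi|^s\hat\phi_0}_{L^2(|\xi|>\sqrt2)}$, $\norm{|\xi|^s\hat\varphi_0}_{L^2(|\xi|>\sqrt2)}$ are all majorized by $\|w_0\|_{H^s}<\delta$ through \eqref{IC_delta}. For the forcing, I would first note that $f=\re G$ and $g=\ima G$ satisfy $\|f\|_{H^s},\|g\|_{H^s}\le\|G\|_{H^s}$: writing $\re G=\tfrac12(G+\bar G)$ and using that $|\xi|^s$ is even (so passing to $\bar G$ costs nothing), the triangle inequality gives the claim. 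Then the nonlinear estimate \eqref{estimate_F}, together with $w\in\mathcal B(T,M)$, yields $\|G(\sigma)\|_{H^s}\lesssim_s M^2+M^3$ uniformly on $[0,T]$. What remains are elementary time integrals: $\int_0^t\cosh(t-\sigma)\,d\sigma=\sinh t$, $\int_0^t\sinh(t-\sigma)\,d\sigma=\cosh t-1$, $\int_0^t d\sigma=t$, and $\int_0^t\max\{1,|t-\sigma|\}\,d\sigma\le\max\{1,T\}\,T$. Replacing $t\le T$ by monotonicity in $\cosh$, $\sinh$ and $\max\{1,\cdot\}$, the low-block homogeneous terms produce $\delta(\cosh T+\sinh T)$, the low-block forcing produces $(M^2+M^3)(\sinh T+\cosh T-1)$, the high-block homogeneous terms produce $\max\{1,T\}\,\delta$, and the high-block forcing produces $T(M^2+M^3)$ together with $(M^2+M^3)\max\{1,T\}\,T$, which are exactly the five summands of \eqref{Estimate_Phi}.

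Finally, the well-definedness asserted in the statement is a byproduct: since every bound above is finite, the right-hand sides of \eqref{phi_low}--\eqref{varphi_high} define genuine elements of $H^s$ at each $t\in[0,T]$, so $\Phi$ and $\Psi$ indeed map into $H^s$. The estimate \eqref{Estimate_Psi} for $\Psi$ follows by repeating the computation with the roles of $f$ and $g$ interchanged, as recorded in Remark \ref{Contraction_estimates}; the numerical bounds are symmetric in $f,g$, so the right-hand side is unchanged. I do not anticipate a genuine obstacle here, since the analytic content has been front-loaded into Corollaries \ref{Fourier_low}--\ref{Fourier_high} and the algebra estimate \eqref{estimate_F}; the only points that demand a little care are the frequency splitting (checking that the homogeneous $|\xi|^s$ weight on the low block is harmless on $|\xi|\le\sqrt2$, so that unweighted data norms control the full low-frequency $H^s$ mass) and verifying that $\re G$ and $\ima G$ inherit the complete $H^s$ control of $G$.
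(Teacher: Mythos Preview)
Your proposal is correct and follows essentially the same route as the paper: combine the low-frequency bound \eqref{Hs_norm_phi_low} (with the observation that $\langle\xi\rangle^s\lesssim_s 1$ on $|\xi|\le\sqrt2$) and the high-frequency bound \eqref{Hs_norm_phi_high}, then feed in $\|w_0\|_{H^s}<\delta$ and $\|G(\sigma)\|_{H^s}\lesssim_s M^2+M^3$ from \eqref{estimate_F}, and evaluate the resulting time integrals. The paper's proof is identical in structure, only slightly less explicit about the $\langle\xi\rangle^s$ versus $|\xi|^s$ bookkeeping and the passage from $G$ to $f,g$.
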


\begin{proof}
Let us show \eqref{Estimate_Phi}. Combining \eqref{Hs_norm_phi_low} and \eqref{Hs_norm_phi_high} we have (recall that $\langle \xi \rangle := \sqrt{1+|\xi|^2}$)
\be\label{Auxiliar_0}
\begin{aligned}
 \|\Phi [f,g ; \phi_0,\varphi_0] (t)\|_{H^s}   \lesssim_s &~  \cosh t  \norm{\hat\phi_0(\xi)}_{L^2(|\xi|\leq \sqrt{2})} + \norm{ \langle\xi \rangle^s \hat\phi_0(\xi)}_{L^2(|\xi|> \sqrt{2})} \\
&  ~ + \sinh t  \norm{\hat\varphi_0(\xi)}_{L^2(|\xi|\leq \sqrt{2})} \\
&  ~ + \int_0^t    \cosh(t-\sigma) \norm{ \hat g(\sigma,\xi) }_{L^2(|\xi|\leq \sqrt{2})} d\sigma \\
& ~ + \int_0^t \sinh (t-\sigma)\norm{\hat f(\sigma,\xi)}_{L^2(|\xi|\leq \sqrt{2})}d\sigma\\
& ~ + \max\{1, |t|\}  \norm{\langle \xi \rangle^s \hat\varphi_0(\xi)}_{L^2(|\xi|> \sqrt{2})} \\
&  ~ + \int_0^t   \norm{ \langle \xi \rangle^s\hat g(\sigma,\xi) }_{L^2(|\xi|> \sqrt{2})} d\sigma \\
&  ~ + \int_0^t  \max\{1, |t-\sigma|\} \norm{\langle \xi \rangle^s \hat f(\sigma,\xi)}_{L^2(|\xi| > \sqrt{2})}d\sigma.
\end{aligned}
\ee
From the definition of solution, \eqref{estimate_F}, \eqref{IC_delta} and \eqref{Ball},
\[
\begin{aligned}
 \|\Phi [f,g ; \phi_0,\varphi_0] (t)\|_{H^s}   \lesssim_s &~   \delta \cosh t +  \delta \sinh t \\
&  ~ +(M^2+M^3) \int_0^t   [ \cosh(t-\sigma) +\sinh (t-\sigma)] d\sigma \\
& ~ + \max\{1, |t|\}  \delta +  |t| (M^2+M^3)\\
&  ~ + (M^2+M^3) \int_0^t  \max\{1, |t-\sigma|\} d\sigma.
\end{aligned}
\]
A further simplification gives
\[
\begin{aligned}
 \|\Phi [f,g ; \phi_0,\varphi_0] (t)\|_{H^s}   \lesssim_s &~   \delta \cosh T + \delta  \sinh T \\
&  ~ +(M^2+M^3) (\sinh T + \cosh T-1)\\
& ~ + \max\{1, T \}  \delta + T (M^2+M^3) ~ + (M^2+M^3)  \max\{1, T\} ~T ,
\end{aligned}
\]
as required. The proof of \eqref{Estimate_Psi} can be established in a completely  similar fashion, using \eqref{Hs_norm_varphi_low} and \eqref{Hs_norm_varphi_high}. 
\end{proof}

\begin{lem}[Contraction]\label{Contraction}
Let $w_0$ be as in \eqref{IC_delta}, $w_1,w_2\in \mathcal B(T,M)$, and $G_1=G[w_1]$, $G_2=G[w_2]$ as in \eqref{Duhamel_2}. Consider the natural decomposition $w_1=f_1+ig_2$ and $w_2=f_2 + ig_2$. Then one has the continuity estimates
\be\label{Contra_11}
\begin{aligned}
& \sup_{t\in [0,T]}\|  \Phi [f_1,g_1 ; \phi_0,\varphi_0] (t) -\Phi [f_2,g_2 ; \phi_0,\varphi_0] (t) \|_{H^s} \\
 & \quad  \lesssim_s (M+M^2)  \big( \cosh T-1 +T +\max\{1,T\} T \big) \sup_{t\in [0,T]}  \|(w_1-w_2)(t)\|_{H^s},
\end{aligned}
\ee
and
\be\label{Contra_22}
\begin{aligned}
& \sup_{t\in [0,T]}\|  \Psi [f_1,g_1 ; \phi_0,\varphi_0] (t) -\Psi [f_2,g_2 ; \phi_0,\varphi_0] (t) \|_{H^s} \\
 & \quad  \lesssim_s (M+M^2)  \big( \cosh T-1 +T +\max\{1,T\} T \big) \sup_{t\in [0,T]}  \|(w_1-w_2)(t)\|_{H^s} .
\end{aligned}
\ee
\end{lem}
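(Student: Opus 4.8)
The plan is to establish the contraction estimates \eqref{Contra_11} and \eqref{Contra_22} by combining the Lipschitz bound \eqref{estimate_FmG} on the nonlinearity with the linear continuity estimates \eqref{Contraction_phi_low} and \eqref{Contraction_phi_high} collected in Remark \ref{Contraction_estimates}. The key observation is that $\Phi$ and $\Psi$ depend \emph{linearly} on the source terms $(f,g)$, so the difference $\Phi[f_1,g_1;\phi_0,\varphi_0]-\Phi[f_2,g_2;\phi_0,\varphi_0]$ equals $\Phi[f_1-f_2,g_1-g_2;0,0]$. Since $w_1,w_2$ share the same initial datum $w_0$, the terms involving $\phi_0,\varphi_0$ cancel exactly and only the inhomogeneous Duhamel contributions survive. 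This is precisely the situation addressed by \eqref{Contraction_phi_low}--\eqref{Contraction_phi_high}.

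First I would split the $H^s$ norm into its low-frequency part $L^2(|\xi|\leq\sqrt2)$ and its high-frequency part $L^2(|\xi|>\sqrt2)$, and apply \eqref{Contraction_phi_low} to the former and \eqref{Contraction_phi_high} to the latter. For the low-frequency piece this produces the two time integrals $\int_0^t\cosh(t-\sigma)\|(\hat g_1-\hat g_2)(\sigma)\|_{L^2(|\xi|\leq\sqrt2)}\,d\sigma$ and $\int_0^t\sinh(t-\sigma)\|(\hat f_1-\hat f_2)(\sigma)\|_{L^2(|\xi|\leq\sqrt2)}\,d\sigma$, while the high-frequency piece contributes $\int_0^t\|(\hat g_1-\hat g_2)(\sigma)\|_{\dots}\,d\sigma$ and $\int_0^t\max\{1,|t-\sigma|\}\|(\hat f_1-\hat f_2)(\sigma)\|_{\dots}\,d\sigma$. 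Writing $G_1-G_2=(f_1-f_2)+i(g_1-g_2)$, each integrand is bounded pointwise in $\sigma$ by $\|G_1-G_2\|_{H^s}$, which by \eqref{estimate_FmG} is controlled by $(M+M^2)\|(w_1-w_2)(\sigma)\|_{H^s}$ once $w_1,w_2\in\mathcal B(T,M)$ (so that $\|w_i\|_{H^s}\leq M$ and $\|w_i\|_{H^s}^2\leq M^2$ on $[0,T]$). Pulling out the supremum $\sup_{t\in[0,T]}\|(w_1-w_2)(t)\|_{H^s}$ and performing the elementary time integrations gives factors $\int_0^T\cosh(t-\sigma)\,d\sigma=\sinh t\leq\sinh T$, $\int_0^T\sinh(t-\sigma)\,d\sigma=\cosh t-1\leq\cosh T-1$, $\int_0^T\,d\sigma\leq T$, and $\int_0^T\max\{1,|t-\sigma|\}\,d\sigma\leq\max\{1,T\}\,T$, matching the claimed aggregate factor $\cosh T-1+T+\max\{1,T\}T$ (absorbing $\sinh T$ into $\cosh T-1$ up to the implicit constant).

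The estimate \eqref{Contra_22} for $\Psi$ follows identically: by the final remark in Remark \ref{Contraction_estimates}, $\Psi$ satisfies the same continuity estimates as $\Phi$ with the roles of $f$ and $g$ interchanged, which leaves the structure of the bound unchanged since \eqref{estimate_FmG} controls the full $H^s$ norm of $G_1-G_2$ symmetrically in $f$ and $g$. I expect the main bookkeeping obstacle to be simply organizing the four integral terms and verifying that each prefactor is dominated by the single combined expression $\cosh T-1+T+\max\{1,T\}T$; there is no genuine analytic difficulty, as all the hard linear estimates have already been assembled in Section \ref{2} and the nonlinear Lipschitz bound is supplied by \eqref{estimate_FmG}. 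Care is needed only to confirm that replacing $t$ by $T$ in each monotone time factor is legitimate uniformly over $t\in[0,T]$, which is immediate since $\cosh$, $\sinh$, and the linear-in-$t$ terms are increasing for $t\geq0$.
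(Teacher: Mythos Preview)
Your proposal is correct and follows essentially the same route as the paper: apply the linear continuity estimates \eqref{Contraction_phi_low}--\eqref{Contraction_phi_high} from Remark \ref{Contraction_estimates}, invoke the Lipschitz bound \eqref{estimate_FmG} (which yields the $(M+M^2)$ prefactor since $w_1,w_2\in\mathcal B(T,M)$), pull out the supremum in time, and integrate the remaining kernels. One small imprecision: you cannot absorb $\sinh T$ into $\cosh T-1$ alone up to a constant (the ratio blows up as $T\to0$), but $\sinh T\lesssim (\cosh T-1)+T$ does hold, so the aggregate factor is correct; the paper sidesteps this by first bounding $\sinh\le\cosh$ and collapsing the two low-frequency integrals into a single $\int_0^t\cosh(t-\sigma)\,d\sigma$ term.
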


\begin{proof}
We prove \eqref{Contra_11}. Using Remark \ref{Contraction_estimates}, and more precisely \eqref{Contraction_phi_low}-\eqref{Contraction_phi_high},
\[
\begin{aligned}
&\|  \Phi [f_1,g_1 ; \phi_0,\varphi_0] (t) -\Phi [f_2,g_2 ; \phi_0,\varphi_0] (t) \|_{H^s} \\
 & \quad  \lesssim_s  \int_0^t    \cosh(t-\sigma) \norm{ (\hat g_1 -\hat g_2)(\sigma,\xi) }_{L^2(|\xi|\leq \sqrt{2})} d\sigma \\
& \qquad\qquad  ~ + \int_0^t \sinh (t-\sigma)\norm{(\hat f_1 -\hat f_2)(\sigma,\xi)}_{L^2(|\xi|\leq \sqrt{2})}d\sigma\\
& \qquad \quad ~ + \int_0^t   \norm{ \langle \xi \rangle^s(\hat g_1-\hat g_2)(\sigma,\xi) }_{L^2(|\xi|> \sqrt{2})} d\sigma \\
& \qquad\qquad  ~ + \int_0^t  \max\{1, |t-\sigma|\} \norm{\langle \xi \rangle^s (\hat f_1 -\hat f_2)(\sigma,\xi)}_{L^2(|\xi| > \sqrt{2})}d\sigma.
\end{aligned}
\]
Using now \eqref{estimate_FmG}, we have
\[
\begin{aligned}
&\|  \Phi [f_1,g_1 ; \phi_0,\varphi_0] (t) -\Phi [f_2,g_2 ; \phi_0,\varphi_0] (t) \|_{H^s} \\
 & \quad  \lesssim_s (M+M^2) \int_0^t    \cosh(t-\sigma)   \|(w_1-w_2)(\sigma)\|_{H^s} d\sigma \\
& \qquad \quad ~ +(M+M^2) \int_0^t    \|(w_1-w_2)(\sigma)\|_{H^s}d\sigma \\
& \qquad\qquad  ~ + (M+M^2) \int_0^t  \max\{1, |t-\sigma|\}  \|(w_1-w_2)(\sigma)\|_{H^s}d\sigma.
\end{aligned}
\]
Therefore,
\[
\begin{aligned}
& \sup_{t\in [0,T]}\|  \Phi [f_1,g_1 ; \phi_0,\varphi_0] (t) -\Phi [f_2,g_2 ; \phi_0,\varphi_0] (t) \|_{H^s} \\
 & \quad  \lesssim_s (M+M^2)  \big( \cosh T-1 +T +\max\{1,T\} T \big) \sup_{t\in [0,T]}  \|(w_1-w_2)(t)\|_{H^s} .
\end{aligned}
\]
The proof for $\Psi$ is very similar.
\end{proof}

\subsection{Proof of Theorem \ref{MT1}} First we prove the small time local well-posedness. Let us fix $\delta>0$, and $M=C\delta$, $C>0$ large depending on $s$.  Consider \eqref{Estimate_Phi}. By taking $T>0$ small enough, we have
\[
 \|\Phi [f,g ; \phi_0,\varphi_0] (t)\|_{H^s}   \leq \frac12 M,
\]
and a similar estimate holds for $\Psi$ in \eqref{Estimate_Psi}. The contraction character follows immediately from Lemma \ref{Contraction}, choosing $T$ smaller if necessary. The contraction principle allows to conclude.

\medskip

Now we prove small data local well-posedness. Let us fix $T>0$, and $M=C(T,s)\delta$, with $C$ fixed.  Consider \eqref{Estimate_Phi}. By taking $\delta>0$ small enough, we have
\[
 \|\Phi [f,g ; \phi_0,\varphi_0] (t)\|_{H^s}   \leq \frac12 M,
\]
and a similar estimate holds for $\Psi$ in \eqref{Estimate_Psi}. The contraction character follows immediately from Lemma \ref{Contraction}, choosing $\delta>0$ smaller if necessary. The fixed point principle allows to conclude. This finishes the proof of Theorem \ref{MT1}, except for the blow-up alternative.

\medskip

Now we prove the blow-up alternative. If $\limsup_{t\to \sup I} \|w(t)\|_{H^s}<+\infty $, then $w(t)$ is bounded in $L^\infty_{t,x}$ and from \eqref{Auxiliar_0}, we have, after Gronwall's inequality,
\[
\|\re w(t)\|_{H^s} \lesssim e^{Ct},
\] 
therefore $\re w(t)$ is bounded in $H^s$. A similar conclusion follows for the $H^s$ norm of $\ima w(t)$, and therefore $I$ is not maximal. The proof of Theorem \ref{MT1} is complete.

\begin{rem}[About global existence]\label{GE} Assume now $w_0\in H^1(\R)$. From the previous subsection, we have a local solution in $H^1$. 
We will review now some almost conservation laws. It is known that \eqref{NLS} for $p=3$ possesses the following formally conserved quantities:
\be\label{Mass}
M[u]: =\int (|u|^2 -1), \qquad \hbox{(Mass)}
\ee
and 
\be\label{Energy}
E[u]:=\frac 12 \int |\partial_x u|^2 - \frac14 \int (|u|^2-1)^2, \qquad \hbox{(Energy)}
\ee
which induce the following conserved identities for \eqref{mNLS}:
\be\label{Mass_w}
\int |w|^2 + 2\re w = \hbox{conserved},
\ee
and 
\be\label{Energy_w}
 \int |\partial_x w|^2 - \frac12 \int (|w|^2 + 2\re w)^2 =\hbox{conserved}.
\ee
Note that \eqref{Mass_w} is not well-defined for solutions in $H^1$ only. The second quantity \eqref{Energy_w} is conserved as long as the solution $w(t)$ remains in $H^1(\R)$, as a standard density argument involving smooth and rapidly decaying solutions shows. However, unlike as in the defocusing case \cite{Zhidkov, Gallo, Gerard}, this energy does not give a priori control of the dynamics. A third conserved quantity comes from the momentum law:
\be\label{Momentum_w}
\ima \int \overline{w} \partial_x w = \hbox{conserved}.
\ee
We conclude that the fact that \eqref{mNLS} has no conservation of mass reveals that it is not clear whether or not there is a reasonable GWP result, except possibly for small data.
%
\end{rem}

\begin{rem}
Another measure of the ``unstable'' character of \eqref{mNLS} (even present in the Schwartz class) is given by the following formal computation.  Consider the linear equation \eqref{LS} for simplicity, and assume that $w(t) \in L^1(\R)$ in \eqref{LS}. Then we have
\[
\frac d{dt} \int \re w(t) = 0, \qquad \frac d{dt} \int \ima w(t) = 2\int \re w(t),  
\]  
which implies that the zero Fourier mode of $w(t)$ grows linearly in time, {\bf unless} $\displaystyle{\int \re w(0) =0}$. It is not clear to the author if a suitable bounded dynamics (for the nonlinear problem) can be obtained by assuming (and propagating) this condition on the initial datum. See e.g. \cite{Goodman} for a similar study in the case of the periodic Kuramoto-Sivashinsky equation. The reader may compare this strong instability result with the weak \emph{logarithmic} grow appearing in a similar NLS equation coming from the study of the binormal flow, see Banica and Vega \cite{BV}. 
\end{rem}

\medskip

\section{Some applications to breather solutions}\label{4}

In this Section we prove Theorems \ref{Insta_Per} and \ref{Insta_KM}. Recall that we have now the particular case $p=3$, which is integrable.

\subsection{Proof of Theorem \ref{Insta_Per}}
This proof is not difficult, and it is based in the notion of  \emph{asymptotic stability}, see e.g. \cite[p. 2]{KMM}. Fix $s>\frac12$. Let us assume that the Peregrine breather $P$ in \eqref{Peregrine} is orbitally stable, as in \eqref{Stability}. Write
\be\label{Q_def}
P(t,x)= e^{it} (1+ Q(t,x)),\qquad Q(t,x):= ~ -\frac{4(1+2it)}{1+ 4t^2 +2x^2}.
\ee
Now consider, as a perturbation of the Peregrine breather, the Stokes wave \eqref{Stokes}.  Indeed, we have (see \eqref{Q_def}),
\[
\lim_{t\to +\infty} \|  e^{it} -P(t) \|_{H^s}= \lim_{t\to +\infty} \| Q(t) \|_{H^s} =0.
\]
Therefore, we have two modulationally unstable solutions to \eqref{NLS} that converge to the same profile as $t\to +\infty$. This fact contradicts the orbital stability, since for $x_0(t)\in \R$ given in \eqref{Stability},
\[
0< c_0 := \| Q(0,x-x_0(0)) \|_{H^s}
\]
is a fixed number, but if $t_0=T$ is taken large enough, $ \| Q(T) \|_{H^s}$ can be made arbitrarily small. This proves Theorem \ref{Insta_Per}.

\subsection{Proof of Theorem \ref{Insta_KM}} The proof is similar to the one in the previous case. Now we consider the Kuznetsov-Ma breather $B$ in \eqref{KM}. Assume that this breather is orbitally stable. Consider as a perturbation the explicit solution $\tilde B$ to \eqref{NLS} that describes the nonlinear addition of a Kuznetsov-Ma breather, and a Peregrine breather, both well-decoupled. This solution can be easily obtained from \cite[Appendix B]{Akhmediev3} after performing the standard changes and the Peregrine's limit procedure. It is also seen (\cite[eqn. (17)]{Akhmediev3}) that 
\[
\tilde B(t,x) = B(t,x) + \tilde P(t,x),
\]
where
\[
\lim_{t\to +\infty} \|  \tilde P(t) \|_{H^s} =0.
\]
However, for $x_0(t)\in \R$ given in \eqref{Stability}, we have $\| \tilde B(0) - B(0,x-x_0(0)) \|_{H^s} =c_1>0$, a fixed nonzero constant. However, for $t_0=T$ arbitrarily large, $ \| \tilde B(T) - B(T) \|_{H^s} $ can be made as small as possible, a contradiction. 

\begin{rem}
As a conclusion, the two preceding proofs reveal that any suitable nonlinear wave appearing as a product of the modulational instability of the Stokes wave in the equation, must be orbitally unstable, and no asymptotic stability should hold.    
\end{rem}


\begin{rem}
For further purposes, we compute the mass and energy \eqref{Mass}-\eqref{Energy} of the Peregrine \eqref{Peregrine} and Kuznetsov-Ma \eqref{KM} breathers. We have
\[
M[P]= E[P]=0,
\]
(however, the $L^2$-norm of $Q(t)$ is never zero, but converges to zero as $t\to +\infty$), and
\[
M[B] = 4\beta, \quad E[B] =-\frac 43\bt^3.
\]
Note that $P$ has same energy and mass as the Stokes wave solution (the nonzero background), a property not satisfied by the standard soliton on zero background. Also, compare the mass and energy of the Kuznetsov-Ma breather with the ones obtained in \cite{AM} for the mKdV breather.
\end{rem}

\medskip

\appendix

\section{Proof of Lemma \ref{Fourier_Linear}}\label{A}

\medskip

Let us consider once again the linear equation in \eqref{Linear}:
\[
i\partial_t w +\partial_x^2w +2\re w =G, \qquad w(t=0)=w_0.
\]
Using the decomposition in \eqref{deco_u0}-\eqref{deco_F}, we have
\[
i(\partial_t \phi + i \partial_t \varphi) +\partial_x^2 (\phi+ i\varphi) +2\phi =f+ ig.
\]
Hence,
\be\label{base_eqns}
\begin{aligned}
\partial_t \phi +\partial_x^2 \varphi = & ~g,\\
-\partial_t \varphi +\partial_x^2 \phi +2\phi = & ~f.
\end{aligned}
\ee
Therefore
\[
\partial_t^2 \phi = -\partial_t \partial_x^2 \varphi + \partial_t g = -\partial_x^2( \partial_x^2 \phi +2\phi -f) +\partial_t g, 
\]
so that
\be\label{eq_phi}
\partial_t^2 \phi + \partial_x^2(\partial_x^2 +2) \phi  = \partial_x^2 f+ \partial_t g.
\ee
Similarly,
\[
\partial_t^2 \varphi =  (\partial_x^2 +2)\partial_t \phi - \partial_t f = -\partial_x^2( \partial_x^2 +2)\varphi + ( \partial_x^2 +2)g  -\partial_t f, 
\]
so that
\be\label{eq_varphi}
\partial_t^2 \varphi +\partial_x^2( \partial_x^2 +2)\varphi =( \partial_x^2 +2)g  -\partial_t f.
\ee
Writing \eqref{eq_phi} in Fourier variables, we have
\be\label{AA}
\partial_t^2 \hat \phi +\xi^2 (\xi^2 -2) \hat\phi = -\xi^2 \hat f + \partial_t \hat g. 
\ee
Now we have to consider two different cases. 

\medskip

\noindent
{\bf Case $|\xi|\leq \sqrt{2}$.} In this case $\xi^2 (\xi^2 -2) \leq 0$. The solution to this ODE is given by the formula
\[
\begin{aligned}
 \hat \phi (t,\xi) =&~ \cosh(|\xi|\sqrt{2-\xi^2} ~ t) \hat\phi_0(\xi) + \frac{\sinh(|\xi|\sqrt{2-\xi^2} ~ t) }{|\xi| \sqrt{2-\xi^2}}  \hat\phi_1(\xi)\\
 &~ +\int_0^t  \frac{\sinh(|\xi|\sqrt{2-\xi^2} (t-\sigma)) }{|\xi| \sqrt{2-\xi^2}}  [-\xi^2 \hat f(\sigma,\xi) + \partial_t \hat g(\sigma,\xi) ]d\sigma.
\end{aligned}
\]
Here, $ \hat\phi_1(\xi) := \mathcal F[ \partial_t \phi (0,x)](\xi)$. We have from \eqref{base_eqns},
\[
 \mathcal F[ \partial_t \phi (0,x)](\xi) = \mathcal F[ -\partial_x^2 \varphi(0,x) + g(0,x) ](\xi) = \xi^2 \hat\varphi_0(\xi) +\hat g(0,\xi).
\]
On the other hand,
\[
\begin{aligned}
\int_0^t  \frac{\sinh(|\xi|\sqrt{2-\xi^2} (t-\sigma)) }{|\xi| \sqrt{2-\xi^2}}   \partial_t \hat g(\sigma,\xi) d\sigma =& ~  - \frac{\sinh(|\xi|\sqrt{2-\xi^2} ~ t) }{|\xi| \sqrt{2-\xi^2}}\hat g(0,\xi)  \\
& ~ + \int_0^t  \cosh(|\xi|\sqrt{2-\xi^2} (t-\sigma)) \hat g(\sigma,\xi) d\sigma.
\end{aligned}
\]
Consequently,
\[
\begin{aligned}
 \hat \phi (t,\xi) =&~ \cosh(|\xi|\sqrt{2-\xi^2} ~ t) \hat\phi_0(\xi) + \frac{\sinh(|\xi|\sqrt{2-\xi^2} ~ t) }{|\xi| \sqrt{2-\xi^2}}  \xi^2 \hat\varphi_0(\xi) \\
 & ~+ \int_0^t  \cosh(|\xi|\sqrt{2-\xi^2} (t-\sigma)) \hat g(\sigma,\xi) d\sigma\\
 &~ -\int_0^t  \frac{\sinh(|\xi|\sqrt{2-\xi^2} (t-\sigma)) }{|\xi| \sqrt{2-\xi^2}}  \xi^2 \hat f(\sigma,\xi)d\sigma,
\end{aligned}
\]
which proves \eqref{phi_low}.

\medskip

\noindent
{\bf Case $|\xi| > \sqrt{2}$.} Now we have $\xi^2 (\xi^2 -2)>0$ and
\[
\begin{aligned}
 \hat \phi (t,\xi) =&~ \cos(|\xi|\sqrt{\xi^2-2} ~ t) \hat\phi_0(\xi) + \frac{\sin(|\xi|\sqrt{\xi^2-2} ~ t) }{|\xi| \sqrt{\xi^2-2}}  \hat\phi_1(\xi)\\
 &~ +\int_0^t  \frac{\sin(|\xi|\sqrt{\xi^2-2} (t-\sigma)) }{|\xi| \sqrt{\xi^2-2}}  [-\xi^2 \hat f(\sigma,\xi) + \partial_t \hat g(\sigma,\xi) ]d\sigma.
\end{aligned}
\]
Here, as in the previous case, $ \hat\phi_1(\xi) := \mathcal F[ \partial_t \phi (0,x)](\xi)= \xi^2 \hat\varphi_0(\xi) +\hat g(0,\xi)$ (see \eqref{base_eqns}).  Finally, 
\[
\begin{aligned}
\int_0^t  \frac{\sin(|\xi|\sqrt{\xi^2-2} (t-\sigma)) }{|\xi| \sqrt{\xi^2-2}}   \partial_t \hat g(\sigma,\xi) d\sigma =& ~ -\frac{\sin(|\xi|\sqrt{\xi^2-2} ~ t) }{|\xi| \sqrt{\xi^2-2}} \hat g(0,\xi) \\
& ~ +  \int_0^t  \cos(|\xi|\sqrt{\xi^2-2} (t-\sigma)) \hat g(\sigma,\xi) d\sigma.
\end{aligned}
\]
Consequently, we have the simplified expression
\[
\begin{aligned}
 \hat \phi (t,\xi) =&~ \cos(|\xi|\sqrt{\xi^2-2} ~ t) \hat\phi_0(\xi) + \frac{\sin(|\xi|\sqrt{\xi^2-2} ~ t) }{|\xi| \sqrt{\xi^2-2}}  \xi^2 \hat\varphi_0(\xi) \\
 & ~+ \int_0^t  \cos(|\xi|\sqrt{\xi^2-2} (t-\sigma)) \hat g(\sigma,\xi) d\sigma\\
 &~ -\int_0^t  \frac{\sin(|\xi|\sqrt{\xi^2-2} (t-\sigma)) }{|\xi| \sqrt{\xi^2-2}}  \xi^2 \hat f(\sigma,\xi)d\sigma.
\end{aligned}
\]
which is nothing but \eqref{phi_high}.

\medskip

Now we deal with the Fourier representation for $\varphi$. Using \eqref{eq_varphi},
\[
\partial_t^2 \hat\varphi +\xi^2 (\xi^2 -2)\hat \varphi = (2-\xi^2)\hat g  -\partial_t \hat f.
\]
As in the case of $\phi$, we consider two different cases.

\medskip

\noindent
{\bf Case $|\xi|\leq \sqrt{2}$.} We have
\[
\begin{aligned}
 \hat \varphi (t,\xi) =&~ \cosh(|\xi|\sqrt{2-\xi^2} ~ t) \hat\varphi_0(\xi) + \frac{\sinh(|\xi|\sqrt{2-\xi^2} ~ t) }{|\xi| \sqrt{2-\xi^2}}  \hat\varphi_1(\xi)\\
 &~ +\int_0^t  \frac{\sinh(|\xi|\sqrt{2-\xi^2} (t-\sigma)) }{|\xi| \sqrt{2-\xi^2}}  [ (2-\xi^2)\hat g(\sigma,\xi)  -\partial_t \hat f(\sigma,\xi)  ]d\sigma.
\end{aligned}
\]
Here, $ \hat\varphi_1(\xi) := \mathcal F[ \partial_t \varphi (0,x)](\xi)$. We have from \eqref{base_eqns},
\[
 \mathcal F[ \partial_t \varphi (0,x)](\xi) = \mathcal F[ (\partial_x^2+2) \phi(0,x) - f(0,x) ](\xi) =(2- \xi^2) \hat\phi_0(\xi) -\hat f(0,\xi).
\]
Finally, 
\[
\begin{aligned}
\int_0^t  \frac{\sinh(|\xi|\sqrt{2-\xi^2} (t-\sigma)) }{|\xi| \sqrt{2-\xi^2}}   \partial_t \hat f(\sigma,\xi) d\sigma =& ~- \frac{\sinh(|\xi|\sqrt{2-\xi^2} ~t ) }{|\xi| \sqrt{2-\xi^2}}    \hat f(0,\xi) \\
& ~+   \int_0^t  \cosh(|\xi|\sqrt{2-\xi^2} (t-\sigma)) \hat f(\sigma,\xi) d\sigma.
\end{aligned}
\]
Consequently,
\[
\begin{aligned}
 \hat \varphi (t,\xi) =&~ \cosh(|\xi|\sqrt{2-\xi^2} ~ t) \hat\varphi_0(\xi) + \frac{\sinh(|\xi|\sqrt{2-\xi^2} ~ t) }{|\xi| \sqrt{2-\xi^2}} (2- \xi^2) \hat\phi_0(\xi)  \\
 & ~- \int_0^t  \cosh(|\xi|\sqrt{2-\xi^2} (t-\sigma)) \hat f(s,\xi) d\sigma \\
 &~ + \int_0^t  \frac{\sinh(|\xi|\sqrt{2-\xi^2} (t-\sigma)) }{|\xi| \sqrt{2-\xi^2}}(2-  \xi^2) \hat g(\sigma,\xi)d\sigma,
\end{aligned}
\]
which proves \eqref{varphi_low}.

\medskip

\noindent
{\bf Case $|\xi| > \sqrt{2}$.} We have
\[
\begin{aligned}
 \hat \varphi (t,\xi) =&~ \cos(|\xi|\sqrt{\xi^2-2} ~ t) \hat\varphi_0(\xi) + \frac{\sin(|\xi|\sqrt{\xi^2-2} ~ t) }{|\xi| \sqrt{\xi^2-2}}  [(2- \xi^2) \hat\phi_0(\xi) -\hat f(0,\xi)] \\
 &~ +\int_0^t  \frac{\sin(|\xi|\sqrt{\xi^2-2} (t-\sigma)) }{|\xi| \sqrt{\xi^2-2}}  [(2-\xi^2)\hat g(\sigma,\xi)  -\partial_t \hat f(\sigma,\xi)  ]d\sigma.
\end{aligned}
\]
Finally, 
\[
\begin{aligned}
\int_0^t  \frac{\sin(|\xi|\sqrt{\xi^2-2} (t-\sigma)) }{|\xi| \sqrt{\xi^2-2}}   \partial_t \hat f(\sigma,\xi) d\sigma = &~ - \frac{\sin(|\xi|\sqrt{\xi^2-2} ~t ) }{|\xi| \sqrt{\xi^2-2}}  \hat f(0,\xi) \\
& ~   + \int_0^t  \cos(|\xi|\sqrt{\xi^2-2} (t-\sigma)) \hat f(\sigma,\xi) d\sigma.
\end{aligned}
\]
Consequently, we have
\[
\begin{aligned}
 \hat \varphi (t,\xi) =&~ \cos(|\xi|\sqrt{\xi^2-2} ~ t) \hat\varphi_0(\xi) + \frac{\sin(|\xi|\sqrt{\xi^2-2} ~ t) }{|\xi| \sqrt{\xi^2-2}}  (2- \xi^2) \hat\phi_0(\xi) \\
 & ~- \int_0^t  \cos(|\xi|\sqrt{\xi^2-2} (t-\sigma)) \hat f(\sigma,\xi) d\sigma\\
 &~ + \int_0^t  \frac{\sin(|\xi|\sqrt{\xi^2-2} (t-\sigma)) }{|\xi| \sqrt{\xi^2-2}} (2-\xi^2)\hat g(\sigma,\xi)  d\sigma.
\end{aligned}
\]
which is nothing but \eqref{varphi_high}.

\medskip

\section{Proof of \eqref{ineq_1}}\label{B}

We want to prove, for $t\geq 0$, 
\[
\sup_{|\xi|\leq \sqrt{2} }\frac{\sinh(|\xi| \sqrt{2-\xi^2} t)}{|\xi|\sqrt{2-\xi^2} }\leq \sinh t.
\]
We have
\[
\begin{aligned}
\sup_{|\xi|\leq \sqrt{2} }\frac{\sinh(|\xi| \sqrt{2-\xi^2} ~t)}{|\xi|\sqrt{2-\xi^2} } =&~ \sup_{\xi \in [0, \sqrt{2}] }\frac{\sinh(\xi \sqrt{2-\xi^2}~ t)}{\xi\sqrt{2-\xi^2} } \\
=&~ \max_{u \in [0, 1] }\frac{\sinh(u \,t)}{u }.
\end{aligned}
\]
The maximum value of the last quantity above is attained at the boundary $u=1$, being $\sinh t$. The other extremal point is $u=0$, for which 
\[
\lim_{u\to 0}\frac{\sinh(u\, t)}{u } = t \leq \sinh t, \quad t\geq 0.
\] 
Consequently, 
\[
 \max_{u \in [0, 1] }\frac{\sinh(u\, t)}{u } = \sinh t,
\]
which proves \eqref{ineq_1}.

\providecommand{\bysame}{\leavevmode\hbox to3em{\hrulefill}\thinspace}
\providecommand{\MR}{\relax\ifhmode\unskip\space\fi MR }
\providecommand{\MRhref}[2]{%
  \href{http://www.ams.org/mathscinet-getitem?mr=#1}{#2}
}
\providecommand{\href}[2]{#2}


\begin{thebibliography}{1}

\medskip

\bibitem{Akhmediev2} N. Akhmediev, A. Ankiewicz, and M. Taki, \emph{Waves that appear from nowhere and disappear without a trace}, Phys. Lett. A 373 (2009) 675--678.

\bibitem{Akhmediev1} N. Akhmediev, V. M. Eleonskii, and N. E. Kulagin, \emph{Exact first-order solutions of the nonlinear Schr\"odinger equation}. Theor. Math. Phys. 72, 809--818 (1987).

\bibitem{Akhmediev} N. Akhmediev, and V. I. Korneev, \emph{Modulation instability and periodic solutions of the nonlinear Schr\"odinger equation}. Theor. Math. Phys. 69, 1089--1093 (1986).

\bibitem{Akhmediev3} N. Akhmediev, J. M. Soto-Crespo, and A. Ankiewicz, \emph{Extreme waves that appear from nowhere: On the nature of rogue waves}, Physics Letters A 373 (2009) 2137--2145.

\bibitem{AM} M. A. Alejo, and C. Mu\~noz, \emph{Nonlinear stability of mKdV breathers}, Comm. Math. Phys. (2013), Vol. 324, Issue 1, pp. 233--262.

\bibitem{AM1} M. A. Alejo, and C. Mu\~noz, \emph{Dynamics of complex-valued modified KdV solitons with applications to the stability of breathers}, Anal. and PDE. \textbf{8} (2015), no. 3, 629--674.

\bibitem{AM2} M. A. Alejo, C. Mu\~noz, and J. M. Palacios, \emph{On the Variational Structure of Breather Solutions}, preprint  arXiv:1309.0625.

\bibitem{BV} V. Banica, and L. Vega, \emph{Scattering for 1D cubic NLS and singular vortex dynamics}, J. Eur. Math. Soc. (JEMS) 14 (2012), no. 1, 209--253.

\bibitem{BM} G. Biondini, and D. Mantzavinos, \emph{Long-time asymptotics for the focusing nonlinear Schr\"odinger
equation with nonzero boundary conditions at infinity and asymptotic stage of modulational instability}, preprint arXiv:1512.06095; to appear in Comm. Pure and Applied Math. (2016).

\bibitem{BM1} G. Biondini, and D. Mantzavinos, \emph{Universal nature of the nonlinear stage of modulational instability}, Phys. Rev. Lett. 116 (2016), 043902.

\bibitem{BPSS} J. L. Bona, G. Ponce, J.-C. Saut, and C. Sparber, \emph{Dispersive blow-up for nonlinear Schr\"odinger equations revisited}, J. Math. Pures Appl. 102 (2014), no. 4, 782--811.

\bibitem{Bona_Saut} J. L. Bona, and J.-C. Saut, \emph{Dispersive Blowup of solutions of generalized Korteweg-de Vries equations}, J. Diff. Eqns. 103 no. 1 (1993) 3--57.

\bibitem{Brunetti} M. Brunetti, N. Marchiando, N. Betti, and J. Kasparian, \emph{Nonlinear fast growth of water waves under wind forcing}, Phys. Lett. A 378 (2014) 1025--1030.

\bibitem{Cazenave}  T. Cazenave, \emph{Semilinear Schr\"odinger equations}, Courant Lecture Notes in Mathematics, 10. New York University, Courant Institute of Mathematical Sciences, New York; American Mathematical Society, Providence, RI, 2003. xiv+323 pp. ISBN: 0-8218-3399-5.

\bibitem{CL} T. Cazenave, and P.-L. Lions, \emph{Orbital stability of standing waves for some nonlinear Schr\"odinger equations},  Comm. Math. Phys.  \textbf{85} (1982), no. 4, 549--561.


\bibitem{CW} T. Cazenave, and F. Weissler, \emph{The Cauchy problem for the critical nonlinear Schršdinger equation in $H^s$}, Nonlinear Anal. 14 (1990), no. 10, 807--836.

\bibitem{Dysthe} K. Dysthe, and K. Trulsen, \emph{Note on breather type solutions of the NLS as models for freak waves}, Phys. Scr. Vol. T82, 48--52 (1999). 

\bibitem{Gallo} C. Gallo, \emph{Schr\"odinger group on Zhidkov spaces},  Adv. Differential Equations 9 (2004), no. 5-6, 509--538. 

\bibitem{Gallo2} C. Gallo, \emph{The Cauchy Problem for Defocusing Nonlinear Schr\"odinger Equations with Non-Vanishing Initial Data at Infinity}, Comm. in Partial Diff. Eqns., 33: 729--771 (2008).

\bibitem{Gerard} P. G\'erard, \emph{The Cauchy problem for the Gross-Pitaevskii equation}, Ann. I. H. Poincar\'e -AN 23 (2006) 765--779.

\bibitem{Goodman} Goodman, J., \emph{Stability of the Kuramoto-Sivashinsky and related systems},  Comm. Pure Appl. Math. 47 (1994), no. 3, 293--306.

\bibitem{GV} J. Ginibre, and G. Velo, \emph{On a class of nonlinear Schršdinger equations. I: The Cauchy problem}, J. Funct. Anal. 32 (1979), 1--32. 

\bibitem{GSS} M. Grillakis, J. Shatah, and W. Strauss, \emph{Stability theory of solitary waves in the presence of symmetry. I}.  J. Funct. Anal.  \textbf{74}  (1987),  no. 1, 160--197.

\bibitem{GNT1} S. Gustafson, K. Nakanishi, and T. P. Tsai, \emph{Scattering theory for the Gross-Pitaevskii equation}, Math. Res. Lett. 13 (2006), 273--285. 

\bibitem{GNT2} S. Gustafson, K. Nakanishi, and T. P. Tsai, \emph{Global dispersive solutions for the Gross-Pitaevskii equation in two and three dimensions}, Ann. Henri Poincar\'e 8 (2007), 1303--1331.

\bibitem{Hadamard} J. Hadamard, \emph{Sur les probl\`emes aux d\'eriv\'ees partielles et leur signification physique}, Princeton University Bulletin (1902), pp. 49--52.

\bibitem{KL} Kalantarov, V. K., and Ladyzhenskaya, O. A.,  \emph{The occurrence of collapse for quasilinear equations of parabolic and hyperbolic types}, J. Math. Sci. (1978) 10: 53. doi:10.1007/BF01109723

\bibitem{Kato} T. Kato. \emph{On the Cauchy problem for the (generalized) Korteweg-de Vries equation}. Adv. in Math. Suppl. Stud., Stud. in Appl. Math, 8 (1983), 93--128.

\bibitem{KPV} C. Kenig, G. Ponce, L. Vega. \emph{On the ill-posedness of some canonical dispersive equations}. Duke Math. J, 106 (2001), 617--633.

\bibitem{Khawaja} U. Al Khawaja, H. Bahlouli, M. Asad-uz-zaman, and S.M. Al-Marzoug, \emph{Modulational instability analysis of the Peregrine soliton}, Comm. Nonlinear Sci. Num. Simul., Vol. 19, Issue 8, August 2014, pp. 2706--2714.

\bibitem{Kliber0} B. Kibler, J. Fatome, C. Finot,	G. Millot, F. Dias, G. Genty, N. Akhmediev, and J. M. Dudley, \emph{The Peregrine soliton in nonlinear fibre optics}, Nature Physics 6, 790--795 (2010) doi:10.1038/nphys1740.

\bibitem{Kibler} B. Kibler, J. Fatome, C. Finot, G. Millot, G. Genty, B. Wetzel, N. Akhmediev, F. Dias, and J. M. Dudley, \emph{Observation of Kuznetsov-Ma soliton dynamics in optical fibre}, Sci. Rep. 2, 463; DOI:10.1038/srep00463 (2012).

\bibitem{KH} C. Klein, and M. Haragus, \emph{Numerical study of the stability of the Peregrine breather}, preprint arXiv:1507.06766.

\bibitem{KMM} M. Kowalczyk, Y. Martel, and C. Mu\~noz, \emph{Kink dynamics in the $\phi^4$ model: asymptotic stability for odd perturbations in the energy space}, preprint arXiv:1506.07420 (2015), to appear in J. Amer. Math. Soc.

\bibitem{Kuznetsov} E. Kuznetsov, \emph{Solitons in a parametrically unstable plasma}, Sov. Phys. Dokl. 22, 507--508 (1977).

\bibitem{Leray} J. Leray, \emph{Sur le mouvement d'{}un liquide visqueux emplissant l'{}espace}, Acta Math. 63 (1934), no. 1, 193--248. 

\bibitem{LP}  F. Linares, and G. Ponce, \emph{Introduction to nonlinear dispersive equations}, Second edition. Universitext. Springer, New York, 2015. xiv+301 pp. 

\bibitem{Ma} Y. C. Ma, \emph{The perturbed plane-wave solutions of the cubic Schr\"odinger equation},  Stud. Appl. Math. 60, 43--58 (1979).

\bibitem{Munoz} C. Mu\~noz, \emph{Stability of integrable and nonintegrable structures}, Adv. Differential Equations 19 (2014), no. 9-10, 947--996.

\bibitem{Peregrine} D.H. Peregrine, \emph{Water waves, nonlinear Schr\"odinger equations and their solutions}, J. Austral. Math. Soc. Ser. B 25, 16--43 (1983).

\bibitem{Shrira} V. I. Shrira, and Y. V. Geogjaev, \emph{What make the Peregrine soliton so special as a prototype of freak waves?} J. Eng. Math. 67, 11--22 (2010).

\bibitem{Tsutsumi} Y. Tsutsumi, \emph{$L^2$-solutions for nonlinear Schr\"odinger equations and nonlinear groups}, Funkcial. Ekvac. 30 (1987), no. 1, 115--125.

\bibitem{Weinstein} M.I. Weinstein, \emph{Lyapunov stability of ground states of nonlinear dispersive evolution equations}, Comm. Pure Appl. Math. \textbf{39}, (1986) 51--68.

\bibitem{Youtube} B. Wetzel, Youtube video at \url{https://www.youtube.com/watch?v=LHqYqiYBSE0}.

\bibitem{Zakharov} V. E. Zakharov, A. I. Dyachenko, and A. O. Prokofiev, \emph{Freak waves as nonlinear stage of Stokes wave modulation instability}, Eur. J. Mech. B - Fluids 5, 677--692 (2006).

\bibitem{Zakharov0} V. E. Zakharov, and A. B. Shabat,  \emph{Exact theory of two-dimensional self-focusing and one-dimensional self-modulation of waves in nonlinear media}, JETP, 34 (1): 62--69.

\bibitem{Zhidkov} P. Zhidkov, \emph{The Cauchy problem for a nonlinear Schr\"odinger equation}, Dubna, 1987.

\end{thebibliography}
\end{document}